\def\R{\mathbb{R}}
\newcommand{\bmat}{\left[\begin{matrix}}
\newcommand{\emat}{\end{matrix}\right]}
\newtheorem{theorem}{Theorem}
\newtheorem{proposition}[theorem]{Proposition}
\newtheorem{lemma}[theorem]{Lemma}
\newtheorem*{corollary}{Corollary}
\theoremstyle{remark}
\newtheorem*{remark}{Remark}
\theoremstyle{definition}
\newcommand{\Z}{\mathbb{Z}}
\newcommand{\Q}{\mathbb{Q}}
\newcommand{\Gr}{\mathrm{Gr}}
\newcommand{\F}{\mathbb{F}}
\newcommand{\rk}{\mathrm{rk}\,}
\newcommand{\SO}{\mathrm{SO}}
\newcommand{\SL}{\mathrm{SL}}
\newcommand{\spn}{\mathrm{span}}
    \title{Mean value formulas on sublattices and flags of the random lattice}
    \author{Seungki Kim}
\begin{document}

\maketitle

\begin{abstract}
We present extensions of the Siegel integral formula (\cite{Sie}), which counts the vectors of the random lattice, to the context of counting its sublattices and flags. Perhaps surprisingly, it turns out that many quantities of interest diverge to infinity.
\end{abstract}

\section{Introduction}

We start by recalling the celebrated Siegel integral formula (\cite{Sie}), one of the cornerstones of geometry of numbers. Let $X_n = \SL(n, \Z) \backslash \SL(n, \R)$ be the space of lattices of determinant $1$, and equip $X_n$ with the measure $\mu_n$ (defined up to a constant, which is to be determined by Theorem \ref{thm:siegel_int} below) that is inherited from the Haar measure of $\SL(n, \R)$; in particular, $\mu_n$ is invariant under the right $\SL(n, \R)$-action on $X_n$. In this setting, Siegel proved the following theorem.

\begin{theorem}[Siegel \cite{Sie}] \label{thm:siegel_int}
$\mu_n(X_n) < \infty$, so upon normalizing we may suppose $\mu_n(X_n) = 1$. Also, for $f: \R^n \rightarrow \R$ a compactly supported and bounded Borel measurable function, we have
\begin{equation*}
\int_{X_n} \sum_{x \in L \atop x \neq 0} f(x) d\mu_n(L) = \int_{\R^n} f(x) dx.
\end{equation*}
\end{theorem}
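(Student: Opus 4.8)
The plan is to compute the left-hand side by an unfolding, arguing by induction on $n$; the unfolding reduces the statement for $X_n$ to volume information about $X_{n-1}$, and when $n-1=1$ that information is trivial. Write $\widehat f(L) = \sum_{0 \neq x \in L} f(x)$ for the \emph{Siegel transform} of $f$, so that the claim is $\int_{X_n}\widehat f\,d\mu_n = \int_{\R^n} f$. As orientation: reindexing the sum by $x\mapsto xg$ gives $\widehat{f\circ g}(L) = \widehat f(Lg)$ for $g\in\SL(n,\R)$, so by the right-invariance of $\mu_n$ the functional $f \mapsto \int_{X_n}\widehat f\,d\mu_n$ is $\SL(n,\R)$-invariant; hence, \emph{once one knows it is finite on $C_c(\R^n)$}, it is a positive $\SL(n,\R)$-invariant Radon measure giving $\{0\}$ zero mass, and on $\R^n\setminus\{0\}$ — where $\SL(n,\R)$ acts transitively with unimodular point stabilizer — it must be a scalar multiple of Lebesgue measure. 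The unfolding below establishes the finiteness, identifies the constant, and also yields $\mu_n(X_n)<\infty$.

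Write $L = \Z^n g$ with $g\in\SL(n,\R)$. Every nonzero $x\in L$ is uniquely $x = ky$ with $k\geq 1$ an integer and $y$ primitive in $L$, and $\SL(n,\Z)$ permutes the primitive vectors of $\Z^n$ transitively with stabilizer $\Gamma_\infty := \mathrm{Stab}_{\SL(n,\Z)}(\e_1)$; therefore $\widehat f(\Z^n g) = \sum_{k\geq1}\sum_{\gamma\in\Gamma_\infty\backslash\SL(n,\Z)} f(k\,\e_1\gamma g)$. Integrating over $X_n = \SL(n,\Z)\backslash\SL(n,\R)$ and unfolding the coset sum — legitimate for any nonnegative Borel $f$ by Tonelli — gives $\int_{X_n}\widehat f\,d\mu_n = \sum_{k\geq1}\int_{\Gamma_\infty\backslash\SL(n,\R)} f(k\,\e_1 g)\,dg$. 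Now $g\mapsto\e_1 g$ identifies $\R^n\setminus\{0\}$ with $H\backslash\SL(n,\R)$, where $H := \mathrm{Stab}_{\SL(n,\R)}(\e_1)\cong\R^{n-1}\rtimes\SL(n-1,\R)$ is unimodular and $\Gamma_\infty = H\cap\SL(n,\Z)\cong\Z^{n-1}\rtimes\SL(n-1,\Z)$; since $\e_1 g$ depends only on $Hg$, Weil's integration formula — with the Haar measures normalized so that the one on $H\backslash\SL(n,\R)$ is Lebesgue — turns each summand into $\mathrm{vol}(\Gamma_\infty\backslash H)\int_{\R^n} f(kx)\,dx = k^{-n}\,\mathrm{vol}(\Gamma_\infty\backslash H)\int_{\R^n} f$. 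Here $\mathrm{vol}(\Gamma_\infty\backslash H) = \mathrm{vol}(\Z^{n-1}\backslash\R^{n-1})\cdot\mathrm{vol}(X_{n-1})$, finite by the inductive hypothesis. Summing over $k$ yields $\int_{X_n}\widehat f\,d\mu_n = \zeta(n)\,\mathrm{vol}(\Gamma_\infty\backslash H)\int_{\R^n} f$ for all nonnegative Borel $f$, and hence for all bounded compactly supported Borel $f$ by splitting $f = f^+ - f^-$ (each piece being dominated by a multiple of the indicator of a compact set, so has finite transform integral).

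It remains to see $\mu_n(X_n)<\infty$ and that the constant is $1$. For finiteness: by Minkowski's convex body theorem every unimodular lattice contains a nonzero vector of bounded Euclidean length, so for a large enough ball $B$ one has $\widehat{\mathbf 1_B}\geq 1$ on all of $X_n$, whence $\mu_n(X_n)\leq\int_{X_n}\widehat{\mathbf 1_B}\,d\mu_n<\infty$ by the identity just proved. For the constant: it is a positive number depending only on $n$; carrying the quotient-measure normalizations through the unfolding relates it to $\mathrm{vol}(X_n)$ and $\mathrm{vol}(X_{n-1})$, and the classical Minkowski--Siegel recursion $\mathrm{vol}(X_n) = \zeta(n)\,\mathrm{vol}(X_{n-1})$ for the Haar-induced volumes, together with the inductive normalization $\mathrm{vol}(X_{n-1}) = 1$, collapses the factor $\zeta(n)\,\mathrm{vol}(\Gamma_\infty\backslash H)$ to exactly $1$ after dividing by $\mu_n(X_n)$; alternatively one simply quotes the value of $\mathrm{vol}(X_n)$.

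The main obstacle, and the only genuinely analytic content, is finiteness: $\widehat f$ blows up into the cusp of $X_n$, and what tames it is precisely the finiteness of $\mathrm{vol}(\Gamma_\infty\backslash H)$ — inductively, $\mathrm{vol}(X_{n-1})<\infty$, which at bottom is reduction theory — together with Minkowski's theorem. If one prefers to prove $\mu_n(X_n)<\infty$ from scratch, it is the classical estimate on a Siegel set in Iwasawa coordinates, where convergence reduces to integrating $\prod_{i<j}(a_i/a_j)^{-1}$ over the region in which the successive ratios $a_i/a_{i+1}$ are bounded below. Everything else is formal: the interchanges of summation and integration are justified for $f\geq 0$ by Tonelli, the passage to general Borel $f$ is routine, and the exact value of the multiplicative constant is a matter of bookkeeping the Haar normalizations (equivalently, of the known volume of $X_n$).
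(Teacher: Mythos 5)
The paper does not actually prove this statement --- it is quoted as Siegel's theorem, and the introduction notes that Siegel's own proof ``interprets the integrand as a pseudo-Eisenstein series and then uses the unfolding trick,'' which is precisely what you do. Your argument is sound: the factorization of each nonzero lattice vector as $k$ times a primitive vector, the unfolding over $\Gamma_\infty\backslash\SL(n,\Z)$, Weil's formula for $H\backslash\SL(n,\R)\cong\R^n\setminus\{0\}$, and the induction on $n$ combined with Minkowski's theorem to get $\mu_n(X_n)<\infty$ are all correct and standard. The one place you lean on an outside input is pinning the constant to $1$, where you invoke the Minkowski--Siegel volume recursion $\mathrm{vol}(X_n)=\zeta(n)\,\mathrm{vol}(\Gamma_\infty\backslash H)$; that is a legitimate citation (it predates and is logically independent of Siegel's formula), but it is worth noting that this is where the content of ``the constant equals $1$'' actually lives --- a self-contained alternative is to test the already-established identity on indicator functions of balls $B_R$ and let $R\to\infty$, justifying the interchange of limit and integral.
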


There are many other useful variations of Theorem \ref{thm:siegel_int}. For instance, Rogers proved the following, known as the Rogers integral formula.

\begin{theorem}[Rogers \cite{Rogers}] \label{thm:rogers_int}
Let $1 \leq k \leq n-1$. Then for $f: (\R^n)^k \rightarrow \R$ a compactly supported and bounded Borel measurable function, we have
\begin{equation*}
\int_{X_n} \sum_{x_1, \ldots, x_k \in L \atop \rk\langle x_1, \ldots, x_k \rangle = k} f(x_1, \ldots, x_k) d\mu_{n}(L) = \int_{\R^n} \ldots \int_{\R^n} f(x_1, \ldots, x_k) dx_1 \ldots dx_k.
\end{equation*}
\end{theorem}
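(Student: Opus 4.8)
The plan is to derive Rogers' formula from Siegel's formula (Theorem \ref{thm:siegel_int}) by unfolding, carried out in two stages. Identify $X_n = \SL(n,\Z)\backslash\SL(n,\R)$ and write $L = \Z^n g$; then a $k$-tuple of $\R$-linearly independent vectors of $L$ is precisely the tuple of rows of $Ag$, where $A$ ranges over the set $\mathcal A$ of $k\times n$ integer matrices of rank $k$, and the integrand becomes $\sum_{A\in\mathcal A} f(Ag)$, a left $\SL(n,\Z)$-invariant function of $g$. Replacing $f$ by $|f|$ and then by $\pm f$, we may assume $f\ge 0$, so that all of the rearrangements below are legitimate by Tonelli's theorem.

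Stage one (reduction to primitive tuples). Every $A\in\mathcal A$ factors \emph{uniquely} as $A=NB$ with $N\in M_k(\Z)$ in Hermite normal form and $B$ \emph{primitive}, meaning that the rows of $B$ are a basis of the saturated sublattice $\Z^n\cap\R\langle\text{rows of }A\rangle$; this is Hermite/Smith reduction, the $\mathrm{GL}(k,\Z)$-ambiguity being killed by the normal form. Hence $\sum_{A\in\mathcal A} f(Ag)=\sum_{N}\sum_{B\text{ prim}}f_N(Bg)$, where $f_N(V):=f(NV)$ is again bounded, compactly supported and Borel (as $\det N\neq 0$). If one knew that $\int_{X_n}\sum_{B\text{ prim}}h(Bg)\,d\mu_n = c_{n,k}\int_{(\R^n)^k}h$ for all such $h$, then since $\int f_N(V)\,dV=|\det N|^{-n}\int f$ and
\begin{equation*}
\sum_{N\text{ Hermite}}|\det N|^{-n}=\prod_{m=0}^{k-1}\zeta(n-m),
\end{equation*}
a series convergent \emph{precisely} when $k\le n-1$, one would obtain $\int_{X_n}\sum_{A\in\mathcal A}f(Ag)\,d\mu_n = c_{n,k}\,\bigl(\prod_{m=0}^{k-1}\zeta(n-m)\bigr)\int f$.

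Stage two (the primitive case). The group $\SL(n,\Z)$ acts transitively on primitive $k$-tuples: by Smith normal form every such tuple is $\SL(n,\Z)$-equivalent to $B_0=[\,I_k\mid 0\,]$, where $n-k\ge 1$ is used to fix a determinant sign on a complement. The stabilizer of $B_0$ is $\Gamma_0=\mathrm{Stab}_{\SL(n,\Z)}(B_0)\cong M_{(n-k)\times k}(\Z)\rtimes\SL(n-k,\Z)$, and the standard unfolding lemma gives $\int_{X_n}\sum_{B\text{ prim}}h(Bg)\,d\mu_n=\int_{\Gamma_0\backslash\SL(n,\R)}h(B_0 g)\,dg$, with $B_0 g$ equal to the tuple of the first $k$ rows of $g$. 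The map $g\mapsto(\text{first }k\text{ rows})$ presents $\Gamma_0\backslash\SL(n,\R)$ as a bundle over $\mathcal V:=\{k\times n\text{ real matrices of rank }k\}$, an open full-measure subset of $(\R^n)^k$, with fiber $\Gamma_0\backslash H$, where $H\cong M_{(n-k)\times k}(\R)\rtimes\SL(n-k,\R)$. The fiber has \emph{finite} volume, because $\Gamma_0\backslash H$ fibers over $X_{n-k}$ — of finite volume by Siegel's theorem itself — with compact torus fibers. Disintegrating the integral along this bundle yields exactly $\int_{X_n}\sum_{B\text{ prim}}h\,d\mu_n = c_{n,k}\int_{(\R^n)^k}h$ with $c_{n,k}=\mathrm{vol}(\Gamma_0\backslash H)<\infty$, as required in stage one.

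Combining the two stages, Rogers' formula is now equivalent to the single identity $c_{n,k}=\prod_{m=0}^{k-1}\zeta(n-m)^{-1}$, and this is the one point that requires genuine work. I would establish it either directly — computing $\mathrm{vol}(\Gamma_0\backslash H)$ from the classical evaluation of $\mathrm{vol}(\SL(n-k,\Z)\backslash\SL(n-k,\R))$ as a product of zeta values, together with a careful accounting of how Haar measures are normalized down the chain $\SL(n)\supset H\supset\SL(n-k)$ so that $\mu_n(X_n)=1$ and $\mathcal V$ carries Lebesgue measure — or by induction on $k$: the case $k=1$ of the argument above reads $c_{n,1}\zeta(n)\int f=\int f$, so Siegel's theorem forces $c_{n,1}=\zeta(n)^{-1}$, and the ratio $c_{n,k}/c_{n,k-1}$ is the single ``one extra coordinate'' volume factor, which works out to $\zeta(n-k+1)^{-1}$. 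I expect essentially all the difficulty to live in this bookkeeping of normalizations; the remainder of the proof is the two bijections (Hermite factorization of $\mathcal A$; transitivity on primitive tuples) and two invocations of the unfolding lemma.
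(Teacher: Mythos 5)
This theorem is quoted from Rogers and not proved in the paper, so there is no in-text proof to compare against; the closest analogue is the paper's proof of Theorem \ref{thm:main}, which proceeds by an entirely different route (the Hecke-operator averaging Lemma \ref{lemma:eqdist} applied to the fixed-lattice count of Theorem \ref{thm:fixed_L}), while your argument is the classical pseudo-Eisenstein/unfolding strategy in the spirit of Siegel's original proof and of the paper's Sections 4--5. Your formal skeleton is correct: the Hermite factorization $A=NB$ is a genuine bijection, $\sum_N|\det N|^{-n}=\prod_{m=0}^{k-1}\zeta(n-m)$ converges exactly for $k\le n-1$, $\SL(n,\Z)$ does act transitively on primitive $k$-tuples when $k<n$, the stabilizer and the unfolding are as you say, and the pushforward of Haar measure under $g\mapsto B_0g$ is a constant multiple of Lebesgue measure on $\mathcal V$ by uniqueness of the invariant measure under the transitive right $\SL(n,\R)$-action. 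Reducing to $f\ge 0$ so Tonelli applies is also fine.

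The genuine gap is the one you yourself flag: the identity $c_{n,k}=\prod_{m=0}^{k-1}\zeta(n-m)^{-1}$ is asserted, not proved, and it is the entire analytic content of the theorem --- everything before it is formal rearrangement. Of your two proposed routes, the direct one is delicate to the point of being unreliable as sketched (the classical evaluation of $\mathrm{vol}(\SL(m,\Z)\backslash\SL(m,\R))$ is itself normalization-dependent and usually \emph{derived from} Siegel's formula, so one must track three different Haar normalizations consistently). The inductive route is the one that actually closes, but the sentence ``the ratio $c_{n,k}/c_{n,k-1}$ is the single one-extra-coordinate volume factor'' is not an argument. What is needed is: unfold only the primitive $(k-1)$-tuple, so the fiber is $\Gamma_0^{(k-1)}\backslash H^{(k-1)}$ with $H^{(k-1)}\cong M_{(n-k+1)\times(k-1)}(\R)\rtimes\SL(n-k+1,\R)$; write the candidate $x_k$ as $(a,y)$ with $a\in\Z^{k-1}$ and $y$ a \emph{primitive} vector of $\Z^{n-k+1}$; observe that the sum over $a$ combined with the integral over the torus $M_{(n-k+1)\times(k-1)}(\Z)\backslash M_{(n-k+1)\times(k-1)}(\R)$ produces Lebesgue measure in the directions spanned by $x_1,\dots,x_{k-1}$, while the sum over primitive $y$ integrated over $X_{n-k+1}$ produces, by the primitive case of Siegel's formula in dimension $n-k+1\ge 2$ (i.e.\ your $k=1$ step, which also needs a line of justification via M\"obius inversion from Theorem \ref{thm:siegel_int}), the factor $\zeta(n-k+1)^{-1}$ times Lebesgue measure in the complementary directions. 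Until that computation, including the normalization of the fiber measure, is written out, the proof establishes only that the left-hand side equals $C\int f$ for some finite constant $C$ depending on $n$ and $k$, not that $C=1$.
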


Rogers also proved explicit formulas of this kind for the cases $\rk\langle x_1, \ldots, x_k \rangle = l$ for any $1 \leq l < k$ --- see (\cite{Rogers}) for the precise statement. This result, called the Rogers integral formula, is the essential tool for the study of random lattice vectors, since it makes possible to study the higher moments of the lattice vector-counting function $\sum_{x\in L \backslash \{0\}} f(x)$. See Schmidt (\cite{Sch3}) for yet more variants of the Siegel integral formula, and Kim (\cite{Kim}), Shapira and Weiss (\cite{SW}), and S\"odergren and Str\"ombergsson (\cite{SS}) for some of the recent applications of these mean value theorems.

The motivation of the present paper is to explore the extensions of these results from the counting of lattice vectors to that of rank $d < n$ sublattices and of flags. In other words, we ask whether they too demonstrate any interesting statistical behavior, like the lattice vectors do.

On the practical side, we hope that such extensions will find applications in lattice-based cryptography. One of the basic implications of Theorem \ref{thm:siegel_int}, that a ball of volume $V$ contains $V$ nonzero lattice vectors on average, is already a fundamental tool for predicting and fine-tuning the decryption process. More recent results, such as the Poissonian behavior of short lattice vectors (\cite{Sod}), have also found applications in cryptanalysis (\cite{BSW}). It is therefore natural to expect that analogous results on sublattices would be useful too, since those are what the BKZ algorithm (stands for Block Korkine-Zolotarev, originally proposed by Schnorr and Euchner \cite{SE94}), the standard decryption algorithm for lattice-based systems, operates on.

Our first result is the following generalization of Theorem \ref{thm:rogers_int}. For a lattice $L \in X_n$ and $1 \leq d < n$, write $\Gr(L, d)$ for the set of primitive rank $d$ sublattices of $L$. For an element $A \in \Gr(L,d)$, define $\det A$ as follows: choose any basis $\{v_1, \ldots, v_d\}$ of $A$, and we set $\det A = \|v_1 \wedge \ldots \wedge v_d \|$, where $\| \cdot \|$ here is the standard Euclidean norm in $\wedge^d \R^n$. This definition is independent of the basis choice. Throughout this paper, for $A \in \Gr(L, d)$ and $H \geq 0$ we write
\begin{equation*}
f_{H}(A) = \begin{cases} 1 & \mbox{if $\det A \leq H$} \\ 0 & \mbox{otherwise.} \end{cases}
\end{equation*}
We also define
\begin{equation*}
a(n,d) = \frac{1}{n}\binom{n}{d}\prod_{i=1}^{d}\frac{V(n-i+1)\zeta(i)}{V(i)\zeta(n-i+1)},
\end{equation*}
where $V(i) = \pi^{i/2}/\Gamma(1+i/2)$ is the volume of the unit ball in $\R^i$, and $\zeta(i)$ is the Riemann zeta function evaluated at the positive integer $i$, except that we pretend $\zeta(1) = 1$ for notational convenience. Then we prove the following, the first main result of the present paper.

\begin{theorem} \label{thm:main}
Suppose $1 \leq k \leq n-1$, $1 \leq d_1, \ldots, d_k \leq n-1$ with $d_1 + \ldots + d_k \leq n-1$. Then
\begin{equation*}
\int_{X_n} \underset{A_1, \ldots, A_k\ \mathrm{independent}}{\sum_{A_1 \in \Gr(L, d_1)} \ldots \sum_{A_k \in \Gr(L, d_k)}} f_{H_1}(A_1) \ldots f_{H_k}(A_k) d\mu_n(L) = \prod_{i=1}^k a(n,d_i)H_i^n.
\end{equation*}
\end{theorem}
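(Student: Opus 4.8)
The plan is to run the unfolding argument that underlies Siegel's and Rogers' formulas, followed by an explicit Haar-measure computation; the one essentially new feature, present only for $k\geq 2$, is that the relevant arithmetic group has several orbits on configurations, so a sum over orbits must be identified and evaluated.

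The starting observation is that a primitive rank-$d$ sublattice $A$ of $L$ is the same datum as the $L$-rational subspace $V=A\otimes\R$: indeed $A=L\cap V$, and $\det A=\det(L\cap V)$ depends only on the pair $(L,V)$. So the integrand of Theorem~\ref{thm:main} is a sum over $k$-tuples $(V_1,\dots,V_k)$ of $L$-rational subspaces with $\dim V_i=d_i$ and $V_1\oplus\cdots\oplus V_k$ direct, weighted by $\prod_i f_{H_i}(L\cap V_i)$. Writing $L=\Z^n g$, $g\in\SL(n,\R)$, one then unfolds. For $k=1$ the elementary divisor theorem gives a single $\SL(n,\Z)$-orbit of primitive rank-$d_1$ sublattices of $\Z^n$, with stabilizer of $\Z^{d_1}\times\{0\}$ equal to $P\cap\SL(n,\Z)$ for $P$ the parabolic fixing $\R^{d_1}\times\{0\}$, and the usual unfolding turns the left-hand side into $\mu_n(X_n)^{-1}\int_{(P\cap\SL(n,\Z))\backslash\SL(n,\R)}f_{H_1}\big((\Z^{d_1}\times\{0\})g\big)\,dg$. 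For $k\geq 2$, $\SL(n,\Z)$ is no longer transitive on admissible configurations --- e.g.\ $(\Z(1,1,0),\Z(1,-1,0))$ and $(\Z e_1,\Z e_2)$ lie in different orbits in $\Z^3$, distinguished by how $A_1\oplus\cdots\oplus A_k$ sits in its primitive hull --- so, choosing orbit representatives, the left-hand side becomes $\mu_n(X_n)^{-1}\sum_{\mathcal O}\int_{\Gamma_{\mathcal O}\backslash\SL(n,\R)}\prod_i f_{H_i}(L\cap V_i^{\mathcal O})\,dg$, where $\Gamma_{\mathcal O}$ is commensurable with the integral points of the algebraic group $Q$ stabilizing a standard direct-sum configuration; $Q$ has Levi $S(\mathrm{GL}_{d_1}\times\cdots\times\mathrm{GL}_{d_k}\times\mathrm{GL}_{n-D})$, $D=d_1+\cdots+d_k$, with abelian unipotent radical of dimension $D(n-D)$.

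For a fixed orbit I would compute the integral using a decomposition of $g$ adapted to the flag $V_1\subset V_1\oplus V_2\subset\cdots$ --- a block Iwasawa decomposition, or iterated use of the parabolics $P_{V_1\oplus\cdots\oplus V_i}$. In suitable coordinates each $\det(L\cap V_i)$ becomes the modulus of a single ``determinant'' torus coordinate $t_i$, so $f_{H_i}$ truncates exactly that direction; against the relevant Haar measure the modular character of $Q$ contributes the weight $t_i^{n-1}\,dt_i$ there, and integrating each $f_{H_i}$ gives $\int_0^{H_i}t_i^{n-1}\,dt_i=H_i^n/n$ --- this is the source of the powers $H_i^n$ and of the factors $1/n$, and it is exactly here that $D\leq n-1$ enters: it is the condition that $N_Q$ be nontrivial, equivalently that the flag variety $Q\backslash\SL(n,\R)$ be positive-dimensional and the truncations integrable, and dropping either the hypothesis or the truncations $f_{H_i}$ is what makes the quantities in the abstract diverge. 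The remaining integration factors through: the integration over the space of configurations supplies a product of Grassmannian volumes, hence the ball volumes $V(\cdot)$ and the binomials $\binom{n}{d_i}$; the covolumes $\mathrm{vol}(\SL_m(\Z)\backslash\SL_m(\R))$ of the semisimple factors of the Levi, together with $\mathrm{vol}(\SL_n(\Z)\backslash\SL_n(\R))$ in the denominator, supply the $\zeta$-ratios; and $N_Q$ contributes volume $1$. For $k=1$, after the telescoping of the $V$- and $\zeta$-products, this assembles precisely into $a(n,d)H^n$.

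The last step is to sum over $\mathcal O$, and I expect this to be the main obstacle: one must enumerate the orbits --- equivalently, classify up to $\SL(n,\Z)$ the ways a direct sum $A_1\oplus\cdots\oplus A_k$ of primitive sublattices can sit in its primitive hull --- compute the normalized covolumes of the corresponding $\Gamma_{\mathcal O}$, check that the resulting series converges absolutely (again by $D\leq n-1$), and show that it collapses. Since every orbit contributes the same factor $\prod_i H_i^n/n$ and differs only in a ``shape'' constant, the orbit sum should organize into an Euler product of rational functions of $p^{-1}$, whose value, combined with the telescoping already seen for $k=1$, reproduces exactly $\prod_{i=1}^k a(n,d_i)H_i^n$; in particular the $A_i$ decouple, with no cross-terms. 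The per-orbit volume computation, though lengthy, is routine manipulation of Haar measures and modular functions. (An alternative would be to induct on $k$, conditioning $L$ on containing a fixed primitive sublattice $A_k$ and analysing the remaining, transverse sublattices in the conditioned lattice, but the transversality constraint genuinely changes the averages involved, so that route incurs the same combinatorial corrections.)
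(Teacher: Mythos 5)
Your strategy --- unfold the configuration sum over $\SL(n,\Z)$-orbits and compute each orbital integral via a Levi/Iwasawa decomposition of the stabilizing parabolic --- is a legitimate alternative to the paper's argument, and for $k=1$ it is essentially Thunder's proof, which the paper cites. But for $k\geq 2$ the proposal has a genuine gap exactly where you flag it: the enumeration of the $\SL(n,\Z)$-orbits on independent tuples $(A_1,\dots,A_k)$ of primitive sublattices, the computation of the covolumes of the stabilizers $\Gamma_{\mathcal O}$, and the proof that the resulting orbit sum converges and collapses to $\prod_i a(n,d_i)$ are all asserted (``I expect \dots should organize into an Euler product \dots reproduces exactly'') rather than carried out. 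This is not a routine verification: the orbits are classified by the gluing data of $A_1\oplus\cdots\oplus A_k$ inside its saturation (already for $k=2$, $d_1=d_2=1$ this is a sum over indices $m$ and gluing isomorphisms, i.e.\ over pairs $(m,a)$ with $a\in(\Z/m)^\times$), the associated local densities must be computed at every prime, and the identity $\sum_{\mathcal O}c_{\mathcal O}=\prod_i a(n,d_i)$ --- in particular the complete decoupling of the $A_i$ --- is precisely the content of the theorem for $k\geq 2$. Without it you have shown at best that the left-hand side equals \emph{some} constant times $\prod_i H_i^n$, and even the absolute convergence of the orbit sum, which you correctly tie to $d_1+\cdots+d_k\leq n-1$, still needs an argument.

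For contrast, the paper avoids this combinatorics entirely: it proves a discrete equidistribution statement for averages over the translates $p^{-1/n}ML$ (Lemma \ref{lemma:eqdist}) and pairs it with the fixed-lattice asymptotic $P_S(L,d,H)=a(n,d)H^n/(\det L)^d+o(\cdot)$ of Theorem \ref{thm:fixed_L}, whose key feature is that the main term is independent of the prescribed transversal sublattice $S$; peeling off $A_k, A_{k-1},\dots$ one at a time then yields the decoupled product with no orbit sum ever appearing. If you want to complete your route instead, the missing Euler-product computation is in effect a $k$-tuple generalization of the primitive-versus-all M\"obius/local-density analysis behind the passage from $a(n,d)$ to $c(n,d)$ in the paper's Remark; it is doable, but it is the bulk of the work and must be written out rather than predicted.
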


\begin{remark}
(i) We point out that the $k=1$ case of this theorem is proved by Thunder (\cite{Thu3}) in the more general context of number fields.

(ii) If one wants a formula that counts non-primitive sublattices as well, by a standard M\"obius inversion argument (see \cite{Sch}, or Section 7.1 of \cite{Kim2}) one can show that we could simply replace all the $a(n,d)$ by
\begin{equation*}
c(n,d) = a(n,d)\prod_{i=1}^d \zeta(n-i+1).
\end{equation*}
The same applies to our other results introduced below.
\end{remark}

It is natural to ask what happens if the sublattices $A_1, \ldots, A_k$ are not independent. In fact, if $A_i \cap A_j \neq A_i, A_j, 0$ for some $i, j$, the corresponding integral diverges, as can be seen from the next statement that we prove.
\begin{theorem}\label{thm:overlap}
Let $1 \leq r < d_1, d_2$, so that $d_1 + d_2 < n + r$. Then
\begin{equation*}
\int_{X_n} \sum_{A \in \Gr(L,d_1)} \sum_{B \in \Gr(L,d_2) \atop \rk A \cap B = r} f_{H_1}(A) f_{H_2}(B) d\mu_n(L) = \infty.
\end{equation*}
\end{theorem}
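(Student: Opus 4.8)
My plan is to turn this divergent integral into the \emph{convergent} mean value formula of Theorem~\ref{thm:main} in dimension $n-r$, at the cost of one extra scalar integration that turns out to be of the shape $\int_0^\infty t^{\gamma}\,dt$. The starting point is a bijection. Fix $L\in X_n$. To a pair $A\in\Gr(L,d_1)$, $B\in\Gr(L,d_2)$ with $\rk(A\cap B)=r$ I associate the primitive rank-$r$ sublattice $C:=A\cap B$, the subspace $W:=\Span C$, and the orthogonal projections $\bar A:=\pi_{W^\perp}(A)$, $\bar B:=\pi_{W^\perp}(B)$ sitting inside $\bar L:=\pi_{W^\perp}(L)$. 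Then $\bar L$ is a lattice of rank $n-r$ and covolume $1/\det C$ in $W^\perp$; the lattices $\bar A,\bar B$ are primitive in $\bar L$, of ranks $d_1-r$ and $d_2-r$; they are independent precisely because $\Span A\cap\Span B=W$; and (using the orthogonal splitting along $W$, in which $A\cap W=C$ has full rank) $\det A=\det C\cdot\det\bar A$, $\det B=\det C\cdot\det\bar B$. Conversely $(C,\bar A,\bar B)$ reconstructs $(A,B)$ as preimages under $\pi_{W^\perp}|_L$, so the inner double sum equals
\begin{equation*}
\sum_{C\in\Gr(L,r)}\ \underset{\bar A,\bar B\ \mathrm{independent}}{\sum_{\bar A\in\Gr(\bar L,\,d_1-r)}\ \sum_{\bar B\in\Gr(\bar L,\,d_2-r)}}\ \mathbf 1[\det C\cdot\det\bar A\le H_1]\,\mathbf 1[\det C\cdot\det\bar B\le H_2].
\end{equation*}

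I would then integrate over $X_n$ and apply the mean value formula relative to a primitive rank-$r$ sublattice: for nonnegative $\Psi$ depending on $\bar L_C:=\pi_{(\Span C)^\perp}(L)$ only through its isometry class,
\begin{equation*}
\int_{X_n}\sum_{C\in\Gr(L,r)}\Psi(\det C,\bar L_C)\,d\mu_n(L)=\kappa_{n,r}\int_0^\infty t^{\,n-1}\int_{X_{n-r}}\Psi\bigl(t,\ t^{-1/(n-r)}\Lambda\bigr)\,d\mu_{n-r}(\Lambda)\,dt,
\end{equation*}
which follows from the unfolding $\int_{X_n}\sum_{C\in\Gr(L,r)}=\int_{\Gamma_0\backslash\SL(n,\R)}$ ($\Gamma_0$ the stabilizer of $\Z^r\subseteq\Z^n$) followed by rotation invariance, which disposes of the Grassmannian direction and of the $X_r$-shape of $C$; the weight $t^{\,n-1}$ and the constant $\kappa_{n,r}=n\,a(n,r)$ are pinned down by taking $\Psi=f_H$ and matching the $k=1$ case of Theorem~\ref{thm:main}. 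Applying this with $\Psi(t,M)$ equal to the number of independent pairs $(\bar A,\bar B)$ in $M$ with $\det\bar A\le H_1/t$ and $\det\bar B\le H_2/t$, and rescaling $\bar A=t^{-1/(n-r)}A'$ inside, the inner count becomes the number of independent pairs $(A',B')$ in a unimodular rank-$(n-r)$ lattice with $\det A'\le H_1 t^{(d_1-r)/(n-r)-1}$ and $\det B'\le H_2 t^{(d_2-r)/(n-r)-1}$. This is the one point where the hypothesis is used: $d_1+d_2<n+r$ is exactly the inequality $(d_1-r)+(d_2-r)\le(n-r)-1$ that permits Theorem~\ref{thm:main} in $X_{n-r}$ with $k=2$. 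Carrying that out and collecting the powers of $t$, the whole integral collapses to
\begin{equation*}
\kappa_{n,r}\,a(n-r,d_1-r)\,a(n-r,d_2-r)\,H_1^{\,n-r}H_2^{\,n-r}\int_0^\infty t^{\,d_1+d_2-n-1}\,dt,
\end{equation*}
and $\int_0^\infty t^{\gamma}\,dt=\infty$ for every real $\gamma$ (it diverges at $0$ when $\gamma\le-1$ and at $\infty$ when $\gamma\ge-1$), so the integral in the statement is $+\infty$.

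Since every integrand above is nonnegative, Tonelli's theorem licenses all interchanges of summation and integration with values in $[0,\infty]$, so the displayed chain of equalities is literal and there is no convergence bookkeeping to carry out. The only substantive ingredient is the relative mean value formula used in the second step --- the disintegration of $\mu_n$ along the sum over primitive rank-$r$ sublattices, which requires computing the modular function Jacobian of the maximal parabolic of $\SL(n,\R)$ stabilizing $\R^r$ and checking that the rotationally invariant factors integrate out to constants. This is standard --- it underlies the classical Siegel and Rogers formulas --- so I expect it to be available (perhaps already in an earlier section); with it in place the remainder is exactly the bookkeeping sketched above, and I anticipate no further obstacle.
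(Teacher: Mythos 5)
Your proposal is correct and follows essentially the same route as the paper: decompose the pair $(A,B)$ over $C=A\cap B$, unfold over the maximal parabolic stabilizing $C$, disintegrate $\mu_n$ into $d\mu_{n-r}\times t^{n-1}\,dt\times(\mathrm{const})$, and apply Theorem \ref{thm:main} in rank $n-r$ to arrive at the divergent integral $\int_0^\infty t^{d_1+d_2-n-1}\,dt$. The only cosmetic difference is that you pin down the weight $t^{n-1}$ and the constant $\kappa_{n,r}$ by matching against the $k=1$ case of Theorem \ref{thm:main}, whereas the paper computes the Jacobian of the Levi decomposition explicitly (Lemma \ref{lemma:measure}); both arguments rest on the same product structure of the disintegration, which you correctly flag as the one ingredient needing verification.
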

Because
\begin{equation} \label{eq:l2}
\int_{X_n} \left(\sum_{A \in \Gr(L, d)} f_{H}(A) \right)^2 d\mu_n(L) = \sum_{r=0}^d \int_{X_n} \sum_{A \in \Gr(L,d)} \sum_{B \in \Gr(L,d) \atop \rk A \cap B = r} f_{H}(A) f_{H}(B) d\mu_n(L),
\end{equation}
Theorem \ref{thm:overlap} has the following consequence.
\begin{corollary}
The $L^2$-norm of $\sum_{A \in \Gr(L, d)} f_{H}(A)$ diverges.
\end{corollary}

Thus, unfortunately, it would be difficult to study $\Gr(L,d)$ via the standard methods. At least we learn that the statistics of rank $d$ sublattices is radically different from that of lattice vectors. In particular, the various Poissonian properties enjoyed by random lattice vectors (see e.g. \cite{Kim}) fail spectacularly for rank $d$ sublattices. It may be possible to tweak the left-hand side of \eqref{eq:l2}, for example by restricting to counting certain pairs of elements of $\Gr(L,d)$, to extract some useful information, but I have been unable to do so.

At the other extreme is the case in which $A_1 \subseteq \ldots \subseteq A_k$, and we would be counting \emph{flags}. For $L \in X_n$ and $d_0 = 0 < d_1 < \ldots < d_k <d_{k+1} = n$, a flag of type $\mathfrak{d} = (d_1, \ldots, d_k)$ (rational with respect to $L$) is a sequence of sublattices
\begin{equation*}
A_1 \subseteq A_2 \subseteq \ldots \subseteq A_k \subseteq L
\end{equation*}
such that $\dim A_i = d_i$. Define
\begin{equation*}
a(n, \mathfrak{d}) = a(n,d_1)\prod_{i=1}^{k-1} \frac{n-d_{i-1}}{d_{i+1}-d_{i-1}}a(n-d_i, d_{i+1} - d_i).
\end{equation*}
(This coincides with $a(\alpha)$ in Thunder (\cite{Thu2}).) Then we have the following formula for counting rational flags.

\begin{theorem} \label{thm:flags}
The $\mu_n$-average of the number of flags $A_1 \subseteq \ldots \subseteq A_k$ of type $\mathfrak{d} = (d_1, \ldots, d_k)$ such that $\det A_i \leq H_i$ for $i = 1, \ldots, k$ is equal to
\begin{equation*}
a(n, \mathfrak{d})\prod_{i=1}^k H_i^{d_{i+1} - d_{i-1}}.
\end{equation*}
\end{theorem}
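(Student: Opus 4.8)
The plan is to induct on the length $k$ of the flag, at each step isolating the smallest member $A_1$ and replacing $L$ by the quotient lattice. Throughout we take each $A_i$ primitive in $L$, i.e.\ $A_i\in\Gr(L,d_i)$ (a \emph{saturated} flag); this is the count governed by the constants $a(\cdot,\cdot)$, and the non-saturated count is recovered by the M\"obius argument of the Remark above. For $k=1$ the assertion is exactly the $k=1$ case of Theorem~\ref{thm:main}, since then $a(n,\mathfrak d)=a(n,d_1)$ and the claimed exponent is $H_1^{d_2-d_0}=H_1^{\,n}$.

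The inductive step rests on a \emph{quotient mean value formula}, which I expect to be the technical heart of the proof. Fix $1\le d<n$; for $L\in X_n$ and primitive $A\in\Gr(L,d)$ let $L/A$ be the quotient lattice in $\R^n/\R A$, metrized via the orthogonal complement of $\R A$; it has rank $n-d$ and determinant $1/\det A$, so $M_A(L):=(\det A)^{1/(n-d)}(L/A)$ lies in $X_{n-d}$. The claim is that for every Borel $\Phi:(0,\infty)\times X_{n-d}\to[0,\infty]$,
\begin{equation*}
\int_{X_n}\sum_{A\in\Gr(L,d)}\Phi\bigl(\det A,\,M_A(L)\bigr)\,d\mu_n(L)=n\,a(n,d)\int_0^\infty t^{\,n-1}\!\int_{X_{n-d}}\Phi(t,M)\,d\mu_{n-d}(M)\,dt .
\end{equation*}
I would prove this by disintegrating $\mu_n$ along $(L,A)\mapsto(\R A,\,A,\,L/A)$: a pair $(L,A)$ is encoded by the $d$-plane $V=\R A$ in the real Grassmannian, the lattice $A\subset V$, the quotient lattice $L/A\subset\R^n/V$, and a gluing datum in the torus $\mathrm{Hom}(L/A,\,V/A)\cong(V/A)^{\,n-d}$ of volume $(\det A)^{\,n-d}$; the normalization of $\mu_n$ distributes over these pieces, and the homogeneity of the configuration under the one-parameter group $\mathrm{diag}(t^{1/d}I_d,\,t^{-1/(n-d)}I_{n-d})\in\SL(n,\R)$ forces the weight in $t$ to be a pure power. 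Specializing $\Phi(t,M)=\mathbf 1[t\le H]$ turns the identity into the $k=1$ case of Theorem~\ref{thm:main}, which pins down both the exponent $n-1$ and the constant $n\,a(n,d)$; this is in essence the fibration used by Thunder (\cite{Thu2}, \cite{Thu3}). The passage from indicators to general non-negative $\Phi$ is monotone convergence.

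Granting this, the induction runs as follows. For $A_1\in\Gr(L,d_1)$ of determinant $t:=\det A_1$, saturated flags $A_1\subseteq A_2\subseteq\cdots\subseteq A_k\subseteq L$ of type $\mathfrak d$ with smallest member $A_1$ correspond bijectively to saturated flags $\bar A_2\subseteq\cdots\subseteq\bar A_k$ of type $\mathfrak d'=(d_2-d_1,\ldots,d_k-d_1)$ in $M:=M_{A_1}(L)\in X_{n-d_1}$ (take $\bar A_i=A_i/A_1$, rescaled; primitivity of $A_i$ in $L$ passes to primitivity of $A_i/A_1$ in $L/A_1$). Since $\det A_i=\det A_1\cdot\det(A_i/A_1)$ by orthogonality and rescaling by $\lambda$ multiplies a rank-$m$ determinant by $\lambda^m$, one gets $\det A_i=t^{(n-d_i)/(n-d_1)}\det\bar A_i$, so the constraint $\det A_i\le H_i$ becomes $\det\bar A_i\le H_i\,t^{(d_i-n)/(n-d_1)}$. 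Applying the quotient formula with $\Phi(t,M)=\mathbf 1[t\le H_1]\cdot N\bigl(M;\mathfrak d';\,H_2t^{(d_2-n)/(n-d_1)},\ldots,H_kt^{(d_k-n)/(n-d_1)}\bigr)$, where $N$ is the flag-counting function, together with the induction hypothesis in dimension $n-d_1$ for the inner count, the average reduces to
\begin{equation*}
n\,a(n,d_1)\,a(n-d_1,\mathfrak d')\prod_{i=2}^k H_i^{\,d_{i+1}-d_{i-1}}\int_0^{H_1}t^{\,n-1+\frac{1}{n-d_1}\sum_{i=2}^k(d_i-n)(d_{i+1}-d_{i-1})}\,dt .
\end{equation*}
Writing $e_i=n-d_i$, the exponent sum telescopes, $\sum_{i=2}^k(d_i-n)(d_{i+1}-d_{i-1})=\sum_{i=2}^k e_i(e_{i+1}-e_{i-1})=-e_1e_2$, so the integrand is $t^{\,d_2-1}$ with $d_2-1\ge0$: the integral converges and equals $H_1^{\,d_2}/d_2=H_1^{\,d_2-d_0}/d_2$. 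Finally the constant identity $\tfrac{n}{d_2}\,a(n,d_1)\,a(n-d_1,\mathfrak d')=a(n,\mathfrak d)$ is immediate from the recursion defining $a(n,\mathfrak d)$: the prefactor $n/d_2$ is its $i=1$ term $\tfrac{n-d_0}{d_2-d_0}$, while the leading term $a(n-d_1,d_2-d_1)$ of $a(n-d_1,\mathfrak d')$ and all its remaining factors coincide with the $i\ge 2$ factors of $a(n,\mathfrak d)/a(n,d_1)$ under the index shift $d_j\mapsto d_j-d_1$. Combining everything gives the claimed value $a(n,\mathfrak d)\prod_{i=1}^k H_i^{\,d_{i+1}-d_{i-1}}$.

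The main obstacle is the quotient mean value formula, i.e.\ making the disintegration of $\mu_n$ along $L\mapsto(\R A,A,L/A)$ precise: verifying that the Grassmannian, the two smaller lattice spaces, and the gluing torus carry the product of the expected measures, and that the $t$-weight is genuinely a power (its exponent and leading constant being then forced by Theorem~\ref{thm:main}). A secondary bookkeeping point is tracking the compounded rescalings of the height bounds $H_i$ through the iterated quotients; once the structure is in place, the combinatorial identities for the exponents and for $a(n,\mathfrak d)$ are routine.
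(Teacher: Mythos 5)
Your plan is correct and follows the same route as the paper: unfold over the smallest member $A_1$ of the flag, reduce to an integral over $t=\det A_1$ of a flag count on the normalized quotient lattice in $X_{n-d_1}$, and induct on $k$. Your exponent telescoping ($\sum_{i\ge 2} e_i(e_{i+1}-e_{i-1})=-e_1e_2$, giving the integrand $t^{d_2-1}$ and the factor $H_1^{d_2}/d_2$) and the constant identity $\frac{n}{d_2}\,a(n,d_1)\,a(n-d_1,\mathfrak{d}')=a(n,\mathfrak{d})$ match the paper's computation exactly. The one genuine difference is how the ``quotient mean value formula'' is established. The paper proves it as an explicit measure decomposition (Lemma \ref{lemma:measure}): it writes $\gamma M$ in Levi form, computes the Jacobian of the coordinate change in the $NAK$ decomposition to get the weight $\alpha^{n-1}\,d\alpha$ together with the constant $\frac{n}{r(n-r)}\frac{\tau(n)}{\tau(r)\tau(n-r)}$, and checks that this combines with $\mathrm{vol}\bigl(\SO(n,\R)/(\SO(d_1,\R)\times\SO(n-d_1,\R))\bigr)$ to give $n\,a(n,d_1)$. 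You instead propose to argue that the $t$-weight is a pure power by homogeneity and then pin down its exponent and constant by specializing to the $k=1$ case of Theorem \ref{thm:main} --- a legitimate shortcut, since that case is proved independently by the Hecke-operator method, and it spares you the Jacobian. Be aware, though, that the indicator specialization $\Phi=\mathbf{1}[t\le H]$ only fixes the $t$-marginal; the substantive content of the lemma is the product structure of the disintegration, i.e.\ that conditionally on $\det A_1=t$ the normalized quotient is distributed according to $\mu_{n-d_1}$, and this still requires the unfolding/equivariance argument you only sketch. You correctly flag this as the remaining obstacle; it is precisely what the paper's Lemma \ref{lemma:measure} supplies.
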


Theorem \ref{thm:flags} has the following corollary.
%(perhaps surprising, in the light of \cite{FMT} or Theorem 5 of \cite{Thu2})
\begin{corollary}
The $\mu_n$-average number of flags of height $\leq H$ is equal to $\infty$.
\end{corollary}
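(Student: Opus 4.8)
The plan is to extract from Theorem~\ref{thm:flags} the full joint law, under the $\mu_n$-average, of the determinant vector $(\det A_1,\dots,\det A_k)$ of a random flag, and then to notice that a bound on the height carves out, in that coordinate space, a region which only \emph{looks} like a finiteness condition: it is in fact an unbounded slab, and the pure-monomial law supplied by Theorem~\ref{thm:flags} has no decay, so the slab has infinite mass the moment $k\ge 2$ --- i.e.\ the moment a genuine flag exists, which forces $n\ge 3$. Since the number of all flags of height $\le H$ is at least the number of flags of any one admissible type, it is enough to treat a single type, and $\mathfrak{d}=(1,2)$ will do.

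Fix $n\ge 3$, take $\mathfrak{d}=(1,2)$, and for Borel $R\subseteq(0,\infty)^2$ set
\begin{equation*}
\nu(R)=\int_{X_n}\#\bigl\{\,A_1\subseteq A_2\subseteq L\ \text{of type }\mathfrak{d}\ :\ (\det A_1,\det A_2)\in R\,\bigr\}\,d\mu_n(L).
\end{equation*}
Monotone convergence makes $\nu$ a $\sigma$-finite Borel measure on $(0,\infty)^2$, and Theorem~\ref{thm:flags} (with $d_0=0$, $d_3=n$, so the exponents read $d_2-d_0=2$ and $d_3-d_1=n-1$) says precisely that $\nu\bigl((0,H_1]\times(0,H_2]\bigr)=a(n,\mathfrak{d})\,H_1^{2}H_2^{n-1}$, with $a(n,\mathfrak{d})>0$. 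Taking the height of the flag to be $\det A_1\det A_2$ (the other standard normalizations are monomials in $\det A_1,\det A_2$ with positive exponents and are handled identically, only the $\delta_j$ below changing), the corollary is the assertion $\nu(R_H)=\infty$, where $R_H=\{u_1u_2\le H\}$. For $j=0,1,2,\dots$ put
\begin{equation*}
B_j=\bigl(0,\ H2^{-(j+1)}\bigr]\times\bigl(2^{j},2^{j+1}\bigr].
\end{equation*}
These are pairwise disjoint, each $B_j\subseteq R_H$ (there $u_1u_2\le H2^{-(j+1)}\cdot 2^{j+1}=H$), and evaluating $\nu$ on $B_j$ by additivity and Theorem~\ref{thm:flags} gives
\begin{equation*}
\nu(B_j)=a(n,\mathfrak{d})\bigl(H2^{-(j+1)}\bigr)^{2}\Bigl((2^{j+1})^{n-1}-(2^{j})^{n-1}\Bigr)=a(n,\mathfrak{d})\,\tfrac{2^{n-1}-1}{4}\,H^{2}\,2^{j(n-3)}\ \ge\ c(n)\,H^{2}
\end{equation*}
with $c(n)>0$ independent of $j$ (here $n\ge 3$). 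Hence $\nu(R_H)\ge\sum_{j\ge 0}\nu(B_j)=\infty$, so the $\mu_n$-average number of flags of height $\le H$ is infinite.

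The same scheme covers an arbitrary type $\mathfrak{d}=(d_1,\dots,d_k)$ with $k\ge 2$ and any monomial height: Theorem~\ref{thm:flags} makes $\nu$ on $(0,\infty)^k$ the measure with density $a(n,\mathfrak{d})\prod_i(d_{i+1}-d_{i-1})\,u_i^{d_{i+1}-d_{i-1}-1}$, and the substitution $u_i=e^{t_i}$ turns $\nu(R_H)$ into a positive constant times $\int_{\{\sum_i c_it_i\le\log H\}}\exp\!\bigl(\sum_i(d_{i+1}-d_{i-1})t_i\bigr)\,dt$, where the $c_i>0$ are the exponents of the chosen height; integrating out one coordinate leaves $\int_{\R^{k-1}}\exp(\ell(t'))\,dt'$ for an affine function $\ell$, which is $+\infty$ because $k-1\ge 1$. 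The only real content here --- and it is a change of perspective rather than a hard estimate --- is recognizing that ``height $\le H$'' is deceptive: in determinant space it is the unbounded region under a hyperbola-type surface, and Theorem~\ref{thm:flags} equips that space with a non-decaying law, so the region has infinite mass precisely when there are at least two determinants to balance against one another (for $k=1$ the region $\{u_1\le H\}$ is bounded and the same law assigns it the finite value $a(n,d_1)H^{n}$). This is what puts the corollary at odds with the intuition from \cite{FMT} and Theorem~5 of \cite{Thu2}, where over a \emph{fixed} lattice the number of flags of bounded height is finite. The remaining bookkeeping --- that $\nu$ is a genuine measure and that the sought average equals $\nu(R_H)$ --- is routine monotone convergence and additivity.
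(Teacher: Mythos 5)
Your proof is correct and follows essentially the same route as the paper: both reduce the claim to the observation that Theorem~\ref{thm:flags} endows determinant space with a non-decaying product law under which the sublevel set of the height is an unbounded region of infinite mass as soon as $k\ge 2$. Your version is in fact somewhat more careful than the paper's one-line computation (which passes informally to a density and writes down a divergent integral over $\{x_1\cdots x_k\le H\}$), since you work directly with the measure determined on rectangles and exhibit disjoint boxes of uniformly positive mass inside the region; the only cosmetic slip is the reference to ``$\delta_j$'' where you mean the boxes $B_j$.
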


Here, the \emph{height} of a flag $A_1 \subseteq \ldots \subseteq A_k$ is defined as the quantity
\begin{equation*}
\prod_{i=1}^k (\det A_i)^{d_{i+1}-d_{i-1}}.
\end{equation*}
(See e.g. \cite{FMT} or \cite{Thu2}.) Therefore, the average number of flags of height $\leq H$ equals
\begin{equation*}
a(n,\mathfrak{d})\int_{x_i > 0 \atop x_1 \ldots x_k \leq H} dx_1 \ldots dx_k,
\end{equation*}
but this is $\infty$ for $k \geq 2$. It may be interesting to compare this result with Theorem 5 in Thunder (\cite{Thu2}).

%%% maybe revisit the following point?
%This may be surprising in the view of Theorem 5 in Thunder's work \cite{Thu2}, which gives an asymptotic formula for the number of height $\leq H$ flags of a given lattice satisfying a certain condition. It seems as though the flags violating that condition comprise a vanishing negligible minority, yet the above corollary suggests that it is not the case. Indeed, if it were, our method for proving Theorem \ref{thm:main} would also apply to this situation, yielding that the average number of height $\leq H$ flags is equal to $(\mathrm{const}) \cdot H \log^{k-1}H$, the main term in Thunder's formula.

% continue writing from here

We end the introduction with a few words on the method of proof. It is a product of a reflection on the phenomenon in which the mean of a counting function over $X_n$ coincides with its main term for a fixed individual lattice. For instance, Theorem \ref{thm:siegel_int} implies
\begin{equation*}
\int_{X_n} \left| B(V) \cap L \backslash \{0\} \right| d\mu_n(L) = V,
\end{equation*}
which is the expected main term of an estimate of $\left| B(V) \cap L \backslash \{0\} \right|$ for a fixed $L$. Among several possible approaches to mean value formulas, we chose the one that transparently shows how this happens. A discrete analogue of Theorem 1 of Rogers (\cite{Rogers}), which bears some semblance to the Hecke equidistribution (Lemma \ref{lemma:eqdist} below), reduces the problem of averaging over $X_n$ to that of counting over a fixed lattice; and the author's recent work (\cite{Kim2}) solves this counting problem. This argument also serves to fix a recently found error in Rogers' proof of the formula named after him (\cite{Rogers}); see Section 2 below for details. For Theorems \ref{thm:overlap} and \ref{thm:flags}, we also need the standard unfolding trick for the Eisenstein series.

If so desired, an appropriate combination of these techniques allows one to handle many other integrals in which the sublattices $A_1, \ldots, A_k$ interact in different ways, e.g. something like
\begin{equation*}
\int_{X_n} \underset{A_1, A_2\ \mathrm{independent}}{\sum_{A_1 \in \Gr(L, d_1)} \sum_{A_2 \in \Gr(L, d_2)}} \sum_{B \in \Gr(A_1 \oplus A_2, r)} f_{H_1}(A_1)f_{H_2}(A_2)f_{H_3}(B) d\mu_n(L).
\end{equation*}

\subsection{Acknowledgments} 

This work was supported by NSF grant CNS-2034176. The author thanks the referee for the careful review of the original manuscript.

\section{Some background}

Fix a positive integer $n$. For a prime $p$ and $c \in \{ 1, \ldots, n \}$, let $\mathcal{M}^{(c)}_p$ be the set of $n \times n$ integral matrices $M = (m_{ij})_{1 \leq i, j \leq n}$ such that $m_{ij} = \delta_{ij}$ for $j \neq c$, and $0 \leq m_{ic} \leq p-1$ for $i < c$, $m_{cc} = p$, and $m_{ic} = 0$ for $i > c$. To illustrate, $M$ is a matrix of the form
\begin{equation*}
\begin{pmatrix} 
1 &            &     & x_1         &    &          &   \\
   & \ddots &     & \vdots     &    &           &   \\
   &            & 1  & x_{c-1} &    &           &  \\
   &            &     & p             &    &           &  \\
   &            &     &                & 1 &           & \\ 
   &            &     &                &    & \ddots & \\
   &            &     &                &    &            & 1 \end{pmatrix}.
\end{equation*}
Also let $\mathcal{M}_p = \bigcup_{c=1}^n \mathcal{M}^{(c)}_p$. It is clear that $|\mathcal{M}^{(c)}_p| = p^{c-1}$, so $|\mathcal{M}_p| = 1 + p + \ldots + p^{n-1}$. For the $n \times n$ diagonal matrix
\begin{equation*}
\alpha_p = \mathrm{diag}(1, \ldots, 1, p) = \begin{pmatrix}
1 & & & \\
   & \ddots & & \\
   &            & 1 & \\
   &            &    & p \end{pmatrix},
\end{equation*}
$\mathcal{M}_p$ serves as a set of all distinct representatives of the right cosets of $\Gamma$ in the double coset $\Gamma \alpha_p \Gamma$, where $\Gamma = \SL(n, \Z)$ --- see Chapter 3 of Shimura (\cite{Shi}). The Hecke operator on the functions on $X_n$ with respect to $\alpha_p$, which we denote by $T_p$, is defined as
\begin{align*}
T_p\rho(L) &= \frac{1}{|\Gamma \backslash \Gamma\alpha_p\Gamma|}\sum_{M \in \Gamma \backslash \Gamma\alpha_p\Gamma} \rho(p^{-1/n}ML) \\
&= \frac{1}{|\mathcal{M}_p|}\sum_{M \in \mathcal{M}_p} \rho(p^{-1/n}ML).
\end{align*}

Throughout this paper, we will also write $L$ for any representative of the coset $L \in X_n$ when it is harmless to do so, as we have done above. In other words, we write $L$ for both an element of $\SL(n,\R)$ and the lattice spanned by the row vectors of $L$, to keep the notations simple.

Lemma \ref{lemma:eqdist} below, which is a discrete analogue of Theorem 1 of Rogers (\cite{Rogers}), is crucial to the present paper. However, the author (thanks to Kevin Schmitt) recently found that Rogers's argument contains an error: in p. 256 of \cite{Rogers}, he claims $\int_{X_n} \rho(\gamma L) d\mu_n = \int_{X_n} \rho(L) d\mu_n$ for any $\gamma \in \SL(n,\R)$, saying that it follows ``from the known properties of the fundamental domain,'' but offering no justification otherwise. This equality is in fact not true, which can be seen by taking $n=2$, $\rho$ any function on $X_n$ vanishing in a neighborhood of the cusp, and
\begin{equation*}
\gamma = \begin{pmatrix} a & 0 \\ 0 & a^{-1} \end{pmatrix}
\end{equation*}
for large $a > 0$, for example. Fortunately, in our discrete context, the needed relation follows from the basic properties of the Hecke operators.

\begin{lemma} \label{lemma:hecke}
Let $\rho(L)$ be an integrable function on $X_n$. Then
\begin{equation*}
\int_{X_n} T_p\rho(L) d\mu_n(L) = \int_{X_n} \rho(L) d\mu_n(L).
\end{equation*}
\end{lemma}
\begin{proof}
The argument is very similar to the proof of Proposition 3.39 in \cite{Shi}. Write $\Gamma = \SL(n,\Z)$ as before, and let
\begin{equation*}
\widetilde{\mathcal{M}}_p^{(c)} = \{E_{cn}M : M \in \mathcal{M}_p^{(c)}\},
\end{equation*}
where $E_{cn}$ is the permutation matrix obtained by swapping the $c$-th and $n$-th rows of the $n \times n$ identity matrix and then changing the sign of the $n$-th row (so that $\det E_{cn} = 1$). Observe that $\widetilde{\mathcal{M}}_p := \bigcup_{c=1}^n \widetilde{\mathcal{M}}_p^{c}$ is also a set of all representatives of the right cosets of $\Gamma$ in $\Gamma\alpha_p\Gamma$. Moreover, $\alpha_p^{-1}M \in \Gamma$ for all $M \in \widetilde{\mathcal{M}}_p$. Therefore
\begin{equation*}
\Gamma = \Gamma \cap \alpha^{-1}_p \Gamma \alpha_p \Gamma = \bigcup_{M \in \widetilde{\mathcal{M}}_p} (\Gamma \cap \alpha_p^{-1} \Gamma \alpha_p \alpha^{-1}_p M) = \bigcup_{M \in \widetilde{\mathcal{M}}_p} (\Gamma \cap \alpha_p^{-1} \Gamma \alpha_p) \alpha^{-1}_p M.
\end{equation*}
This shows that the elements $\alpha_p^{-1}M$, $M \in \widetilde{\mathcal{M}}_p$, serve as the coset representatives of $\Gamma \cap \alpha^{-1}_p \Gamma \alpha_p$ in $\Gamma$. Hence we can reinterpret $T_p$ as
\begin{equation*}
T_p\rho(L) = \frac{1}{|\mathcal{M}_p|}\sum_{N \in (\Gamma \cap \alpha_p^{-1} \Gamma \alpha_p) \backslash \Gamma} \rho(p^{-1/n}\alpha_p NL).
\end{equation*}

For a choice of the fundamental domain $P$ with respect to $\Gamma \cap \alpha_p^{-1} \Gamma \alpha_p$, we have
\begin{align*}
&\int_{X_n} T_p\rho(L) d\mu_n(L) \\
&= \frac{1}{|\mathcal{M}_p|} \int_{X_n} \sum_{N \in (\Gamma \cap \alpha_p^{-1} \Gamma \alpha_p) \backslash \Gamma} \rho(p^{-1/n}\alpha_p NL) d\mu_n(L) \\
&= \frac{1}{|\mathcal{M}_p|} \int_{P} \rho(p^{-1/n}\alpha_p L) d\mu_n(L) \\
&= \frac{1}{|\mathcal{M}_p|} \int_{p^{-1/n}\alpha_p P} \rho(L) d\mu_n(L).
\end{align*}
However, $p^{-1/n}\alpha_p P$ is a fundamental domain with respect to $\alpha_p \Gamma \alpha^{-1}_p \cap \Gamma$. Since $|\mathcal{M}_p| = [\Gamma : \Gamma \cap \alpha_p^{-1} \Gamma \alpha_p] = [\Gamma : \alpha_p \Gamma \alpha^{-1}_p \cap \Gamma]$ (see e.g. Proposition 3.6 of \cite{Shi}), this completes the proof.

\end{proof}

\begin{lemma} \label{lemma:eqdist}
Let $\rho(L)$ be a non-negative integrable function on $X_n$. Suppose $\lim_{p \rightarrow \infty} T_p \rho(L)$ exists and have the same (finite) value $I$ for all $L \in X_n$. Then $I = \int_{X_n} \rho d\mu_n$.
\end{lemma}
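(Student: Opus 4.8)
The plan is to mimic Rogers's original argument (Theorem 1 of \cite{Rogers}), transferring it from the continuous horocycle-flow setting to the discrete Hecke-operator setting. The point is that Lemma \ref{lemma:eqdist} is really an ergodic-type statement: the operators $T_p$ push any point mass toward the Haar-uniform distribution on $X_n$ as $p \to \infty$, so if $T_p\rho$ converges to a constant, that constant must be the Haar average of $\rho$.

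First I would fix an arbitrary $L_0 \in X_n$ and set $I = \lim_{p\to\infty} T_p\rho(L_0)$. I would consider the sequence of measures $\nu_p$ on $X_n$ defined by $\nu_p(E) = p^{-(n-1)}\#\{M \in \mathcal{M}_p : p^{-1/n}ML_0 \in E\}$, so that $T_p\rho(L_0) = \int_{X_n} \rho \, d\nu_p$. The key geometric fact — the discrete analogue of horocycle equidistribution — is that $\nu_p \to \mu_n$ weakly as $p \to \infty$. This should follow from the fact that the matrices $M(x_1,\ldots,x_{n-1})$ with $x_i$ ranging over residues mod $p$ are a set of coset representatives for a congruence-type subgroup, so that $\nu_p$ is (a rescaling of) the normalized counting measure on a Hecke orbit, and Hecke orbits equidistribute; alternatively one can check directly, as Rogers does, that $\int \phi \, d\nu_p \to \int \phi \, d\mu_n$ for $\phi$ in a suitable dense class (e.g. bounded continuous, or built from Siegel-type functions). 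Granting this, for bounded continuous $\rho$ we would immediately get $I = \int_{X_n}\rho\,d\mu_n$, and since $I$ does not depend on $L_0$, this also forces the limit function to be the constant $\int_{X_n}\rho\,d\mu_n$ for every starting point, which is what the lemma asserts.

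The remaining work is to remove the boundedness and continuity hypotheses, since the lemma only assumes $\rho$ is non-negative and integrable. For this I would use a truncation/approximation argument: write $\rho = \rho \wedge N + (\rho - N)_+$, apply the bounded-continuous case (after a further mollification) to $\rho \wedge N$, and control the tail $(\rho-N)_+$ using the hypothesis that $T_p\rho(L) \to I < \infty$ for \emph{every} $L$, together with non-negativity — monotone convergence and Fatou in the form $\liminf_p \int \rho\wedge N \, d\nu_p \le \liminf_p \int \rho \, d\nu_p = I$, combined with weak convergence on the truncation, pins $\int \rho\wedge N\, d\mu_n \le I$ for all $N$, hence $\int\rho\,d\mu_n \le I$; the reverse inequality comes from Fatou applied to $\nu_p \to \mu_n$ and $\int\rho\,d\nu_p \to I$. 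I expect the main obstacle to be precisely this interchange of limits near infinity: the measures $\nu_p$ are singular with respect to $\mu_n$, so one cannot naively dominate, and one must exploit the uniformity of the hypothesis over all $L$ (this is why the lemma demands the limit exist and equal $I$ for \emph{all} $L$, not just $\mu_n$-a.e. $L$) to rule out escape of mass. Rogers's original proof handles exactly this point, and I would follow his bookkeeping, adapting the continuous averages to the discrete sums over $\mathcal{M}_p$.
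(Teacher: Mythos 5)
Your strategy is genuinely different from the paper's, and it has a gap at exactly the step you yourself flag as ``the main obstacle.'' You propose to prove weak convergence $\nu_p \to \mu_n$ of the orbit measures attached to a fixed $L_0$ (a form of Hecke equidistribution) and then to upgrade from bounded continuous test functions to general non-negative integrable $\rho$. That upgrade is the whole difficulty, and your sketch of it does not close. The measures $\nu_p$ are purely atomic, hence singular with respect to $\mu_n$, so weak convergence gives no control on $\int g\,d\nu_p$ for a bounded Borel $g$: one can have $\int g\,d\nu_p = 0$ for all $p$ while $\int g\,d\mu_n = 1$ (take $g$ the indicator of the complement of the countable union of the supports of the $\nu_p$). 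Mollifying $\rho \wedge N$ in $L^1(\mu_n)$ does not help for the same reason, and the Portmanteau theorem only gives $\limsup_p \int v\,d\nu_p \le \int v\,d\mu_n$ for upper semicontinuous $v$ --- the wrong direction for the bound $\int (\rho\wedge N)\,d\mu_n \le I$ that you need, and the counting functions to which the lemma is applied are upper, not lower, semicontinuous. You correctly sense that the hypothesis ``for \emph{all} $L$'' must be exploited and that ``Rogers's bookkeeping'' does so, but you never say how; as written, the argument for unbounded, discontinuous $\rho$ is missing, and this is precisely the case the lemma exists to handle (the paper's remark after the lemma makes this point explicitly, and also notes that Hecke equidistribution, which you take as your main input, is a much deeper statement than the lemma itself).

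The mechanism the paper actually uses is more elementary and bypasses equidistribution entirely. The only geometric input is the invariance of $\mu_n$ under the right $\SL(n,\R)$-action, which gives the identity $\int_{X_n} T_p\rho\,d\mu = \int_{X_n}\rho\,d\mu$ for every $p$. The ``for all $L$'' hypothesis is exploited by integrating over $L$ rather than by fixing $L_0$: writing $[F]_h := \min(F,h)$ with $h > I$, pointwise convergence $T_p\rho(L)\to I$ for every $L$ plus the bound $[T_p\rho]_h \le h$ on the probability space $X_n$ give $\int [T_p\rho]_h\,d\mu \to I$ by dominated convergence; combining this with the elementary inequality $T_p[\rho]_h \le [T_p\rho]_h$ and the invariance identity yields $\int [\rho]_h\,d\mu \le I$, and monotone convergence in $h$ gives $\int\rho\,d\mu \le I$. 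The reverse inequality is Fatou's lemma applied to $T_p\rho$ together with the same invariance identity. If you want to salvage your route, you would have to restrict to $\rho$ whose discontinuity set is $\mu_n$-null and prove a quantitative non-escape-of-mass statement for the $\nu_p$ uniformly enough to handle the unbounded part --- or else abandon the fixed-$L_0$ viewpoint and integrate over $X_n$, at which point you have rederived the paper's argument and the equidistribution input becomes unnecessary.
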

\begin{proof}

For any function $F$ and $h \in \R$, write $[F]_h := \min(F, h)$. For any $h > I$, the dominated convergence theorem implies
\begin{equation*}
\int_{X_n} \left[T_p\rho\right]_h(L) d\mu_n(L) \rightarrow I
\end{equation*}
as $p \rightarrow \infty$. Also by Lemma \ref{lemma:hecke}, we have
\begin{equation*}
\int_{X_n} [\rho]_h(L) d\mu_n(L) = \int_{X_n} T_p[\rho]_h(L) d\mu_n(L) \leq \int_{X_n} [T_p\rho]_h(L) d\mu_n(L).
\end{equation*}
Taking $p \rightarrow \infty$ and then $h \rightarrow \infty$ here, by the monotone convergence theorem we obtain the upper bound
\begin{equation*}
\int_{X_n} \rho(L) d\mu_n(L) \leq I.
\end{equation*}

On the other hand, consider the integral
\begin{align*}
&\int_{X_n} T_p\rho(L) d\mu_n(L) \\
&= \int_{X_n} \frac{1}{|\mathcal{M}_p|} \sum_{M \in \mathcal{M}_p} \rho(p^{-1/n}M L) d\mu_n(L) \\
&= \frac{1}{|\mathcal{M}_p|} \sum_{M \in \mathcal{M}_p} \int_{X_n}  \rho(p^{-1/n}M L) d\mu_n(L).
\end{align*}
Combined with Fatou's lemma, this implies that
\begin{equation*}
I = \int_{X_n} \lim_{p \rightarrow \infty} T_p\rho(L) d\mu_n(L) \leq \lim_{p \rightarrow \infty} \int_{X_n} T_p\rho(L) d\mu_n(L) = \int_{X_n} \rho(L) d\mu_n(L),
\end{equation*}
again by Lemma \ref{lemma:hecke}. This completes the proof.

\end{proof}

\begin{remark}
Lemma \ref{lemma:eqdist} may remind one of the Hecke equidistribution (\cite{DM}). Of course, the latter is a much deeper statement, but Lemma \ref{lemma:eqdist} has the advantage that it applies to functions that are neither compactly supported nor bounded, which is the situation we are in.
\end{remark}

To confirm the rather strong condition for Lemma \ref{lemma:eqdist}, we need the following theorem recently established by the author (\cite{Kim2}). Here we only state the parts that we need.

\begin{theorem}[Kim \cite{Kim2}] \label{thm:fixed_L}
For a (full-rank) lattice $L \in \R^n$, define $P(L, d, H)$ to be the number of primitive rank $d < n$ sublattices of $L$ of determinant less than or equal to $H$. Also let
\begin{equation*}
b(n, d) = \max\left(\frac{1}{d}, \frac{1}{n-d}\right).
\end{equation*}
Then
\begin{equation} \label{eq:fixed_L}
P(L, d, H) = a(n, d)\frac{H^n}{(\det L)^d} + O\left(\sum_{\gamma \in \Q \atop 0 \leq \gamma \leq n - b(n,d)} b_\gamma(L)H^\gamma\right),
\end{equation}
where the implied constant depends only on $n$ and $d$, the sum on the right is finite, and every $b_\gamma$ is a reciprocal of a product of the successive minima of $L$, so that the right-hand side of \eqref{eq:fixed_L} is invariant under rescaling $L$ to $\alpha L$ and $H$ to $\alpha^d H$.

Furthermore, for a sublattice $S \subseteq L$ of rank $\leq n-d$, define $P_S(L,d,H)$ to be the number of primitive rank $d$ sublattices of $L$ of determinant $\leq H$ that intersect trivially with $S$. Then $P_S(L,d,H)$ also satisfies the estimate \eqref{eq:fixed_L}; in particular, the error term is independent of $S$. The sublattices that do intersect $S$ contribute at most $O(\sum_{\gamma \leq n-1} b_\gamma H^{\gamma})$.
\end{theorem}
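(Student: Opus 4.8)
The plan is to prove \eqref{eq:fixed_L} by a double induction on $(n,d)$, following the coordinate-peeling strategy of Schmidt, peeling relative to a system of short vectors of $L$ so that the successive minima stay under control throughout. Choose linearly independent $v_1,\dots,v_n\in L$ with $\|v_i\|=\lambda_i(L)$, and put $W=\Span(v_1,\dots,v_{n-1})$, a rational hyperplane, so that $L\cap W$ has rank $n-1$ with $\lambda_i(L\cap W)=\lambda_i(L)$ for $1\le i\le n-1$. The base case $d=1$ is the classical estimate for the number $N(L,H)$ of points of $L$ in the ball of radius $H$: the Lipschitz principle gives $N(L,H)=V(n)H^n/\det L+O\!\left(\sum_{j=1}^{n-1}H^j/(\lambda_1(L)\cdots\lambda_j(L))\right)$, a M\"obius inversion over dilates inserts the factor $\zeta(n)^{-1}$ without changing the shape of the error, and dividing by $2$ (since $\Z v=\Z(-v)$) yields \eqref{eq:fixed_L} for $d=1$, with error exponent $n-b(n,1)=n-1$.

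For the inductive step one splits the primitive rank $d$ sublattices $A$ of $L$ with $\det A\le H$ according to whether $A\subseteq W$. Those inside $W$ are exactly the primitive rank $d$ sublattices of the rank $n-1$ lattice $L\cap W$ with $\det A\le H$, handled by the inductive hypothesis in dimension $n-1$ (which also transports the uniformity assertion one step down). For those not inside $W$, the trace $A'=A\cap W$ is a primitive rank $d-1$ sublattice of $L\cap W$; once $A'$ is fixed, $A$ is determined by a positive integer $m$ (the index of the image of $A$ in the rank-one lattice $L/(L\cap W)$) together with an offset $\vec q$ ranging over a lattice of rank $n-d$ attached to $A'$, subject to a coprimality condition, and $(\det A)^2$ becomes a positive-definite quadratic form $Q_{A'}(\vec q,m)$ in the $n-d+1$ variables $(\vec q,m)$ whose discriminant is $(\det A')^{2(n-d)}$ up to a factor depending only on $\det(L\cap W)$. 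Counting the admissible $(\vec q,m)$ with $m\ge1$ and $Q_{A'}(\vec q,m)\le H^2$ as lattice points in half of an ellipsoid gives $c_{n,d}\,(\det A')^{-(n-d)}H^{n-d+1}$ up to an error, and summing over the primitive rank $d-1$ sublattices $A'$ of $L\cap W$ with $\det A'\le H$ — i.e.\ evaluating $\sum_{A'}(\det A')^{-(n-d)}$ by Riemann--Stieltjes against the inductive asymptotic for $P(L\cap W,d-1,\cdot)$ — produces a main term of the shape $c\,H^n/(\det L)^d$. That the constant is $a(n,d)$ may then be checked either by matching the resulting recursion to the closed form, or simply by specializing to $L=\Z^n$ and invoking Schmidt's computation (alternatively, the main term and the constant follow softly from the theory of Eisenstein series, cf.\ \cite{FMT}).

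The crux, and the main obstacle, is the error term. Every ``$\sim$'' above conceals an error: the lattice-point count for $(\vec q,m)$ contributes surface-type terms governed by the successive minima of the auxiliary lattice — hence, through $\lambda_i(L\cap W)=\lambda_i(L)$ and the choice of a reduced $W$, by those of $L$ — while the Riemann--Stieltjes evaluation of $\sum_{A'}(\det A')^{-(n-d)}$ feeds in the error of the inductive estimate for $P(L\cap W,d-1,\cdot)$. One must show that after all of these are combined and summed the total error is $O\!\left(\sum_\gamma b_\gamma(L)H^\gamma\right)$ with every $\gamma\le n-b(n,d)$ and every $b_\gamma(L)$ a reciprocal of a product of successive minima of $L$; verifying that the worst surviving exponent is exactly $n-b(n,d)$, and not larger, is the delicate step, and is where peeling along a reduced $W$ is used in an essential way. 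Finally, the uniformity statement for $P_S$ is obtained by carrying the auxiliary sublattice $S$, along with $S\cap W$ and the linear conditions it imposes on $(\vec q,m)$, through the whole recursion: since $\rk S\le n-d$, a rank $d$ sublattice meeting $S$ nontrivially satisfies an extra linear condition and so contributes only at the level of the error; the leading term is thus untouched, and $S$-independence of the error is inherited from the inductive hypothesis one dimension down.
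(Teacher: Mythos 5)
This theorem is not proved in the paper at all: it is imported verbatim from \cite{Kim2} and used as a black box (its role here is to supply the main term $a(n,d)H^n/(\det L)^d$ together with an error that is $o(H^n/(\det L)^d)$ with coefficients depending only on the successive minima of $L$ and, crucially, not on the auxiliary sublattice $S$). So there is no in-paper proof to compare your outline against, and I can only judge the outline on its own terms.

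Your strategy --- Schmidt-style peeling along a hyperplane $W=\Span(v_1,\dots,v_{n-1})$ chosen so that $\lambda_i(L\cap W)=\lambda_i(L)$, with the rank-$d$ sublattices not contained in $W$ parametrized by a rank-$(d-1)$ trace $A'=A\cap W$ plus an ellipsoid count in the fibre --- is a reasonable route, and the main-term bookkeeping (volume of the half-ellipsoid, partial summation against $P(L\cap W,d-1,\cdot)$, constant pinned down on $\Z^n$) is standard. But the actual content of the theorem is the error term, and that is precisely the part you defer: you state that ``verifying that the worst surviving exponent is exactly $n-b(n,d)$ \dots is the delicate step'' without carrying it out, so the proof of the assertion being proved is missing. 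Concretely: (i) the lattice-point count in the fibre over $A'$ has an error governed by the successive minima of an auxiliary rank-$(n-d+1)$ lattice that \emph{varies with} $A'$; you must control these uniformly in $A'$ in terms of the minima of $L$, and then the weighted sum $\sum_{A'}(\det A')^{-(n-d)}$ of these errors --- dominated by the $A'$ of smallest determinant, down to $\lambda_1\cdots\lambda_{d-1}$ --- is exactly where the exponent $n-\max(1/d,1/(n-d))$ is either achieved or lost. Nothing in the sketch establishes this; even in the base case the M\"obius inversion already produces terms like $(H/\lambda_1)\log(H/\lambda_1)$ sitting at the edge of the claimed exponent when $n$ is small, so there is no slack to absorb sloppiness in the induction. (ii) For $P_S$, ``meeting $S$ nontrivially'' is not a single extra linear condition on $A$: it is a union, over the primitive vectors (or subspaces) of $S$, of containment conditions, and bounding that union by $O(\sum_{\gamma\le n-1}b_\gamma H^\gamma)$ \emph{uniformly in $S$} requires its own counting argument (e.g.\ summing over primitive $w\in S$ of controlled length the number of rank-$d$ sublattices of determinant $\le H$ containing $w$, via a count in $L/w$), which the sketch does not supply. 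Since the $S$-uniformity is exactly what the present paper needs from this theorem in the proof of Theorem \ref{thm:main}, this is not a cosmetic omission.
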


For a $d \times n$ matrix $A = (a_{ij})_{d \times n}$ and $c \in \{1, \ldots, n\}$, write $A^{(c)} = (a_{ij})_{d \times (c-1)}$ for the ``first'' $d \times (c-1)$ submatrix of $A$. We also define $\det A = (\det AA^T)^{1/2}$. As with $L$, let us use the same letter $A$ to refer to the rank $d$ lattice generated by the row vectors of $A$. With this convention, the definition of $\det A$ just given for the matrix $A$ is consistent with the definition of $\det A$ for $A \in \Gr(L,d)$ in the introduction. For $A \in \Gr(L,d)$, we also sometimes write $\det_L A$ when the extra clarification might be helpful.

\begin{proposition} \label{prop:mat-det}
We continue with the notations of the preceding discussion. In addition, choose a basis $\{v_1, \ldots, v_n\}$ of $L \in X_n$, and also write $L$ for the matrix whose $i$-th row is $v_i$. Let $A$ be an integral $d \times n$ matrix, $c \in \{d+1, \ldots, n\}$, and let $\bar{L}^{(c)}$ be the $(c-1) \times n$ matrix whose $i$-th row vector $\bar{v}_i$ is the projection of $v_i$ onto $\spn\{v_c, \ldots, v_n\}^\perp$. Then, provided $\det A^{(c)} \neq 0$, $\det AL  \geq \det A^{(c)}\bar{L}^{(c)}$.
\end{proposition}
\begin{proof}
We first present the proof for the case $c = n$. Write $A = (A^{(n)}; a)$, with $a = (a_{1n}, \ldots, a_{dn})^T$. Similarly, write
\begin{equation*}
L =
\begin{pmatrix}
1 &            &    & \mu_1 \\ 
   & \ddots &    & \vdots \\
   &            & 1 & \mu_{n-1} \\
   &            &    & 1
\end{pmatrix}
\cdot
\begin{pmatrix}
 & & \\
& \bar{L}^{(n)} & \\
 & & \\
 & v_n &
\end{pmatrix}
\end{equation*}
for some $\mu_1, \ldots, \mu_{n-1} \in \R$. Write $\mu = (\mu_1, \ldots, \mu_{n-1})^T$. Then
\begin{equation*}
AL = A^{(n)}\bar{L}^{(n)} + (A^{(n)}\mu + a)v_n.
\end{equation*}
Temporarily write $\mathcal{A} = A^{(n)}\bar{L}^{(n)}$, $\mathcal{B} = (A^{(n)}\mu + a)v_n$. Then
\begin{equation*}
(AL)(AL)^T = (\mathcal{A} + \mathcal{B})(\mathcal{A}^T + \mathcal{B}^T) = \mathcal{A}\mathcal{A}^T + \mathcal{B}\mathcal{B}^T,
\end{equation*}
because $\mathcal{A}\mathcal{B}^T = \mathcal{B}\mathcal{A}^T = 0$. Also observe that
\begin{equation*}
\mathcal{B}\mathcal{B}^T = \|v_n\|^2(A^{(n)}\mu + a)(A^{(n)}\mu + a)^T.
\end{equation*}
The matrix-determinant lemma now gives
\begin{equation*}
(\det AL)^2 = (\det \mathcal{A})^2(1+ \|v_n\|^2(A^{(n)}\mu + a)^T(\mathcal{A}\mathcal{A}^T)^{-1}(A^{(n)}\mu + a)) \geq (\det \mathcal{A})^2,
\end{equation*}
which yields the desired conclusion.

To prove the $c=n-1$ case, for example, observe that we can write
\begin{equation*}
\bar{L}^{(n)} = \begin{pmatrix}
1 &            &    & \mu'_1 \\ 
   & \ddots &    & \vdots \\
   &            & 1 & \mu'_{n-2} \\
   &            &    & 1
\end{pmatrix}
\cdot
\begin{pmatrix}
 & & \\
& \bar{L}^{(n-1)} & \\
 & & \\
 & \bar{v}_{n-1} &
\end{pmatrix}
\end{equation*}
for some $\mu'_1, \ldots, \mu'_{n-2} \in \R$, where $\bar{v}_{n-1}$ is the last row of $\bar{L}^{(n)}$. By repeating the argument above, we obtain $\det A^{(n)}\bar{L}^{(n)} \geq \det A^{(n-1)}\bar{L}^{(n-1)}$. The remaining cases follow by further repetitions.
\end{proof}

\section{Proof of Theorem \ref{thm:main}}

Recall that we wish to evaluate the integral
\begin{equation} \label{eq:toy2}
\int_{X_n} \underset{A_1, \ldots, A_k\ \mathrm{independent}}{\sum_{A_1 \in \Gr(L, d_1)} \ldots \sum_{A_k \in \Gr(L, d_k)}} f_{H_1}(A_1) \ldots f_{H_k}(A_k) d\mu_n(L),
\end{equation}
where we have $d_1 + \ldots + d_k := d < n$. The plan is to instead estimate the sum
\begin{equation} \label{eq:toy2sum}
\frac{1}{|\mathcal{M}_p|} \sum_{M \in \mathcal{M}_p}\underset{A_1, \ldots, A_k\ \mathrm{independent}}{\sum_{A_1 \in \Gr(\Z^n, d_1)} \ldots \sum_{A_k \in \Gr(\Z^n, d_k)}} f_{p^{d_1/n}H_1}(A_1ML) \ldots f_{p^{d_k/n}H_k}(A_kML)
\end{equation}
for a fixed $L \in X_n$ in the $p$ limit, and then use Lemma \ref{lemma:eqdist} to prove Theorem \ref{thm:main}. More precisely,  we rewrite \eqref{eq:toy2sum} as
\begin{equation*}
\frac{1}{|\mathcal{M}_p|} \sum_{c=1}^{n} \sum_{M \in \mathcal{M}^{(c)}_p}\underset{A_1, \ldots, A_k\ \mathrm{independent}}{\sum_{A_1 \in \Gr(\Z^n, d_1)} \ldots \sum_{A_k \in \Gr(\Z^n, d_k)}} f_{p^{d_1/n}H_1}(A_1ML) \ldots f_{p^{d_k/n}H_k}(A_kML),
\end{equation*}
and study the inner sum for each $c$, to show that this approaches $\prod_{i=1}^k a(n,d_i)H_i^n$ as $p \rightarrow \infty$. This is independent of $L$, and hence Lemma \ref{lemma:eqdist} applies.

Throughout this section, we fix a representative of $L$ in $\SL(n,\R)$, also denoted by $L$. It will also be helpful for us to identify each $A_l \in \Gr(\Z^n,d_l)$ with a choice of its matrix representative, e.g. its Hermite normal form (HNF), for explicit computations. Let $c \in \{1, \ldots, n\}$, and
\begin{equation*}
M = \begin{pmatrix} 
1 &            &     & x_1         &    &          &   \\
   & \ddots &     & \vdots     &    &           &   \\
   &            & 1  & x_{c-1} &    &           &  \\
   &            &     & p             &    &           &  \\
   &            &     &                & 1 &           & \\ 
   &            &     &                &    & \ddots & \\
   &            &     &                &    &            & 1 \end{pmatrix} \in \mathcal{M}^{(c)}_p.
\end{equation*}
If we write $A_l = (a^l_{ij})_{d_l \times n}$ as a matrix, then for each $l$
\begin{equation*}
A_lML =
\begin{pmatrix}
a^l_{11}    & a^l_{12}    & \dots & \sum_{j=1}^{c-1} a^l_{1j}x_j + pa^l_{1c}      & \dots & a^l_{1n} \\
  \vdots     &  \vdots      &          & \vdots                                                           &          & \vdots \\
a^l_{d_l1} & a^l_{d_l2} & \dots & \sum_{j=1}^{c-1} a^l_{d_lj}x_j + pa^l_{d_lc} & \dots & a^l_{d_ln}
\end{pmatrix}
L.
\end{equation*}
Denote by $A^{(c)}_l = (a^l_{ij})_{d_l \times (c-1)}$ the first $d_l \times (c-1)$ submatrix of $A_l$, and write
\begin{equation*}
A = (a_{ij})_{d \times n} = \begin{pmatrix} A_1 \\ \vdots \\ A_k \end{pmatrix},
A^{(c)} = (a_{ij})_{d \times (c-1)} = \begin{pmatrix} A^{(c)}_1 \\ \vdots \\ A^{(c)}_k \end{pmatrix},
\end{equation*}
which are $d \times n$ and $d \times (c-1)$ matrices, respectively.

\subsection{The main term}

The main term of \eqref{eq:toy2sum} comes from the terms with $c=n$, namely
\begin{equation}\label{eq:c=n}
\frac{1}{|\mathcal M_{p}|}\sum_{M \in \mathcal{M}^{(n)}_p}\underset{A_1, \ldots, A_k\ \mathrm{independent}}{\sum_{A_1 \in \Gr(\Z^n, d_1)} \ldots \sum_{A_k \in \Gr(\Z^n, d_k)}} f_{p^{d_1/n}H_1}(A_1ML) \ldots f_{p^{d_k/n}H_k}(A_kML).
\end{equation}

Consider the map $A^{(n)}: \F_p^{n-1} \rightarrow \F_p^d$ induced by the matrix $A^{(n)}$ as above. We claim that the contribution to the above sum from the $A_l$'s for which $A^{(n)}$ is not surjective is negligible. There are two cases:
\begin{enumerate}[(i)]
\item $\det A^{(n)} \geq p$ over $\Q$. Then
\begin{equation*}
p \leq \det A^{(n)} \leq \prod_{l=1}^k \det A^{(n)}_l,
\end{equation*}
so there exists an $l$ such that $\det A^{(n)}_l \geq p^{d_l/n + 1/nk}$, and hence $\det A^{(n)}_l\bar{L} \gg_L p^{d_l/n + 1/nk}$, where $\bar{L}$ here is $\bar{L}^{(n)}$ in the statement of Proposition \ref{prop:mat-det}. Proposition \ref{prop:mat-det} implies $\det A_lML \gg_L p^{d_l/n + 1/nk}$. For $p$ sufficiently large, this is greater than $p^{d_l/n}H_l$, and so does not contribute to the sum.

\item $\det A^{(n)} = 0$ over $\Q$. Row-reduce $A$ so that the last row equals $(0, \ldots, 0, C)$ with $C \neq 0$ --- possible by assumption $\rk A = d$ --- which shows that $\det AM \geq p$. This again implies $\det A_lML \gg_L p^{d_l/n + 1/nk}$ for some $l$.
\end{enumerate}

Now if $A^{(n)}$ did induce a surjective map onto $\F_p^d$, then the vectors
\begin{equation*}
\left( \sum_{j=1}^{n-1} {a_{1j}x_j}, \ldots, \sum_{j=1}^{n-1} {a_{dj}x_j} \right)
\end{equation*}
are equidistributed mod $p$. Therefore computing \eqref{eq:c=n} is equivalent to computing
\begin{equation*}
\frac{p^{n-1}}{p^d|\mathcal M_{p}|}\underset{\rk_p A^{(n)} = d}{\sum_{A_1 \in \Gr(\Z^n,d_1)} \ldots \sum_{A_k \in \Gr(\Z^n,d_k)}} f_{p^{d_1/n}H_1}(A_1L) \ldots f_{p^{d_k/n}H_k}(A_kL)
\end{equation*}
(here $\rk_p$ means the rank modulo $p$). This is equal to
\begin{equation} \label{eq:ribbit}
\frac{p^{n-1}}{p^d|\mathcal M_{p}|}\underset{\rk A^{(n)} = d}{\sum_{A_1 \in \Gr(\Z^n,d_1)} \ldots \sum_{A_k \in \Gr(\Z^n,d_k)}} f_{p^{d_1/n}H_1}(A_1L) \ldots f_{p^{d_k/n}H_k}(A_kL)
\end{equation}
because $\rk_p A^{(n)} = d \Leftrightarrow \rk A^{(n)} = d$ and $p \nmid \det A^{(n)}$, and we already showed that if $p \mid \det A^{(n)}$ the corresponding sets of $A_l$'s do not contribute to the sum.

The summation in \eqref{eq:ribbit} requires that $A_k$ is independent of $A_1, \ldots, A_{k-1}$ and $(0, \ldots, 0, 1)$, since otherwise, $A_1 \oplus \ldots \oplus A_k$ contains a nonzero multiple of $(0, \ldots, 0, 1)$, which implies $\rk A^{(n)} < d$. Thus, by applying Theorem \ref{thm:fixed_L} with $S = A_1 \oplus \ldots \oplus A_{k-1} \oplus \spn_\Z\{(0, \ldots, 0, 1)\}$, we can rewrite \eqref{eq:ribbit} as
\begin{align*}
&\frac{p^{n-1}}{p^d|\mathcal M_{p}|}\left(a(n,d_k)H_k^np^{d_k} + o_L(H_k^np^{d_k})\right) \cdot \\
&\underset{\rk A^{(n)} = d - d_k}{\sum_{A_1 \in \Gr(\Z^n,d_1)} \ldots \sum_{A_{k-1} \in \Gr(\Z^n,d_{k-1})}} f_{p^{d_1/n}H_1}(A_1L) \ldots f_{p^{d_{k-1}/n}H_{k-1}}(A_{k-1}L),
\end{align*}
where $A^{(n)}$ here now means
\begin{equation*}
A^{(n)} = \begin{pmatrix} A^{(n)}_1 \\ \vdots \\ A^{(n)}_{k-1} \end{pmatrix}.
\end{equation*}

Repeating the same argument with other $A_i$'s, we find that \eqref{eq:ribbit} equals
\begin{equation*}
\frac{p^{n-1}}{|\mathcal M_{p}|}\left(\prod_{i=1}^ka(n,d_i)H^n_i + o_{L,H_1, \ldots, H_k}(1)\right).
\end{equation*}
Recalling $|\mathcal M_{p}| = \sum_{i=0}^{n-1} p^i$, and taking $p \rightarrow \infty$, this gives the intended main term $\prod_{i=1}^k a(n,d_i)H^n_i$ for \eqref{eq:toy2sum}.

\subsection{Error terms, part 1}

In the rest of this section, we show that, for $c \in \{1, \ldots, n-1\}$,
\begin{equation}\label{eq:c<n}
\frac{1}{|\mathcal M_{p}|}\sum_{M \in \mathcal{M}^{(c)}_p}\underset{A_1, \ldots, A_k\ \mathrm{independent}}{\sum_{A_1 \in \Gr(\Z^n, d_1)} \ldots \sum_{A_k \in \Gr(\Z^n, d_k)}} f_{p^{d_1/n}H_1}(A_1ML) \ldots f_{p^{d_k/n}H_k}(A_kML)
\end{equation}
vanishes as $p \rightarrow \infty$. This will complete the proof of Theorem \ref{thm:main}.

We first assume $c > d$, and consider the contributions from those $A_1, \ldots, A_k$ such that $\rk_p A^{(c)} = d$. By a similar argument to the $c=n$ case, the surjection of the linear map $A^{(c)} : \mathbb{F}^{c-1}_p \rightarrow \mathbb{F}^d_p$ implies that their contributions amount to
\begin{equation*}
\frac{p^{c-1}}{p^d|\mathcal M_{p}|}\underset{\rk_p A^{(c)} = d}{\sum_{A_1 \in \Gr(\Z^n,d_1)} \ldots \sum_{A_k \in \Gr(\Z^n,d_k)}} f_{p^{d_1/n}H_1}(A_1L) \ldots f_{p^{d_k/n}H_k}(A_kL).
\end{equation*}
We simply drop the rank condition and bound this by
\begin{equation*}
\frac{p^{c-1}}{|\mathcal M_{p}|}\left(\prod_{i=1}^k a(n,d_i)H_i^n + o_{L, H_1, \ldots, H_k}(1)\right),
\end{equation*}
which clearly vanishes as $p \rightarrow \infty$.

Continue with the assumption $c > d$, but this time suppose $\rk_p A^{(c)} < d$. If $\det A^{(c)} \geq p$ (over $\Q$), then we can argue exactly as in (i) in Section 3.1 above and show it does not contribute to \eqref{eq:c<n}. The case $\det A^{(c)} = 0$ will be handled below. %%% continue from here..
% 1. we can assume rk A^{(c)} = c'. if rk A^{(c)} > c', that implies det A >= p
% 2. if rk A^{(c+1)} = c' + 1, then easy
% 3. if rk A^{(c+1)} = c', that's the most annoying part --- because we cannot entirely discard its contribution.

\subsection{Error terms, part 2}

We now assume that either $c > d$ and $\det A^{(c)} = 0$, or $c \leq d$, in which case $\det A^{(c)} = 0$ necessarily. Write $\rk_p A^{(c)} = c' < \min(c,d)$. We claim that we may assume $\rk A^{(c)} = c'$ as well. If not, then $\rk A^{(c)} > \rk_p A^{(c)}$, and thus the HNF of $A^{(c)}$ has a leading coefficient (also called a pivot) that is a nonzero multiple of $p$. But this implies that $\det A \geq p$ by the Cauchy-Binet formula, and we can again argue as in (i) in Section 3.1 to show that this $A$ contributes zero to \eqref{eq:c<n}.

Suppose in addition that $\rk A^{(c+1)} = c' + 1$. Then the HNF of $A$ has a pivot in column $c$, and it follows that the HNF of $AM$ has a pivot in column $c$ that is a multiple of $p$. This implies $\det AM \geq p$, and again we argue as in (i) in Section 3.1.

Summarizing our argument so far, it remains to consider the case in which $\det A^{(c)} = 0$, and $\rk_p A^{(c)} = \rk A^{(c)} = \rk A^{(c+1)} = c' < \min(c,d)$. For integers $1 \leq r_1 < r_2 < \ldots < r_{c'} \leq d$, let $r = (r_1, \ldots, r_{c'})$, and for a matrix $B$ with $d$ rows, denote by $B|_r$ the matrix with $c'$ rows whose $i$-th row is the $r_i$-th row of $B$. For each $r$, let us restrict \eqref{eq:c<n} to those $A$ for which the rows of $A^{(c)}|_r$ are linearly independent. Thus, we are considering the following restriction of \eqref{eq:c<n}:

\begin{equation*}
\frac{1}{|\mathcal M_{p}|}\sum_{M \in \mathcal{M}^{(c)}_p}\underset{\rk A = d,\, \rk A^{(c)}|_r = \rk_p A^{(c)} = \rk A^{(c)} = \rk A^{(c+1)} = c'}{\sum_{A_1 \in \Gr(\Z^n, d_1)} \ldots \sum_{A_k \in \Gr(\Z^n, d_k)}} f_{p^{d_1/n}H_1}(A_1ML) \ldots f_{p^{d_k/n}H_k}(A_kML).
\end{equation*}
For each $A$ appearing in this sum, there exists a rational $d \times c'$ matrix $R$ such that $A^{(c)} = RA^{(c)}|_r$. Due to the rank condition $\rk A^{(c)} = \rk A^{(c+1)}$, we also must have $A^{(c+1)} = RA^{(c+1)}|_r$. In other words, $R$ and $A^{(c+1)}|_r$ determine $A^{(c+1)}$ under our current assumptions. With this understanding, we can rewrite the above sum as
\begin{align*}
&\frac{1}{|\mathcal M_{p}|} \sum_R \sum_{M \in \mathcal{M}^{(c)}_p} \\
&\underset{\rk A = d, A^{(c)} = RA^{(c)}|_r, \atop \rk A^{(c)}|_r = \rk_p A^{(c)} = \rk A^{(c)} = \rk A^{(c+1)} = c'}{\sum_{A_1 \in \Gr(\Z^n, d_1)} \ldots \sum_{A_k \in \Gr(\Z^n, d_k)}} f_{p^{d_1/n}H_1}(A_1ML) \ldots f_{p^{d_k/n}H_k}(A_kML),
\end{align*}
where the sum over $R$ is over all $d \times c'$ matrices such that the inner sum is nontrivial. Similarly to the argument in Section 3.1, for each $A$ appearing in the sum, as $M$ ranges over $\mathcal{M}^{(c)}_p$, the $c$-th column of $A|_rM$
\begin{equation*}
\left( \sum_{j=1}^{c-1} a_{r_1j} x_j + pa_{r_1c}, \ldots, \sum_{j=1}^{c-1} a_{r_{c'}j} x_j + pa_{r_{c'}c} \right)^T
\end{equation*}
becomes equidistributed mod $p$. Also, multiplying by $M$ from the right keeps all the rank conditions invariant. Thus the above sum becomes
\begin{align}
\frac{p^{c-1}}{p^{c'}|\mathcal{M}_p|} \sum_R \underset{\rk A = d, A^{(c)} = RA^{(c)}|_r, \atop \rk A^{(c)}|_r = \rk_p A^{(c)} = \rk A^{(c)} = \rk A^{(c+1)} = c'}{\sum_{A_1 \in \Gr(\Z^n, d_1)} \ldots \sum_{A_k \in \Gr(\Z^n, d_k)}} f_{p^{d_1/n}H_1}(A_1L) \ldots f_{p^{d_k/n}H_k}(A_kL) \notag \\
= \frac{p^{c-1}}{p^{c'}|\mathcal{M}_p|}\underset{\rk A = d,\, \rk A^{(c)}|_r = \rk A^{(c)} = \rk A^{(c+1)} = c'
%, \atop A^{(c+1)}|_{\tilde{r}}\mathrm{\ dependent\ on\ } A^{(c+1)}|_{r}
}{\sum_{A_1 \in \Gr(\Z^n,d_1)} \ldots \sum_{A_k \in \Gr(\Z^n,d_k)}} f_{p^{d_1/n}H_1}(A_1L) \ldots f_{p^{d_k/n}H_k}(A_kL). \label{eq:abcde}
\end{align}

It remains to estimate this sum \eqref{eq:abcde}. By dropping the rank conditions and applying Theorem \ref{thm:fixed_L}, \eqref{eq:abcde} is at most
\begin{equation*}
\frac{p^{c-c'+d-1}}{|\mathcal{M}_p|}\left(\prod_{i=1}^k a(n,d_i)H_i^{n} + o_{L,H_1, \ldots, H_k}(1)\right),
\end{equation*}
which approaches $0$ as $p \rightarrow \infty$ provided $n-c > d-c'$ (note that $n-c \geq d-c'$ always). If $n-c = d-c'$, note that the HNF of $A$ is of the form
\begin{equation*}
\begin{pmatrix} P & * \\ 0 & Q \end{pmatrix},
\end{equation*}
where $P$ is a $c' \times c$ matrix, and $Q$ is a $(d-c') \times (n-c)$ matrix, where $P$ and $Q$ are themselves HNFs and must be of full rank. Now since $n-c=d-c'$ by assumption, $Q$ is a square matrix, and therefore $Q$ must be an upper triangular matrix with no nonzero diagonal entries. This implies that $A_1 \oplus \ldots \oplus A_k$ intersects nontrivially with $\spn_\Z\{(0, \ldots, 0, 1)\}$. Therefore, one of the $A_l$'s intersects nontrivially with the lattice $S_l := A_1 \oplus \ldots \oplus \hat{A}_l \oplus \ldots \oplus A_{k} \oplus \spn_\Z\{(0, \ldots, 0, 1)\}$ --- where $\hat{A}_l$ here indicates that $A_l$ does \emph{not} appear in the sum --- of dimension at most $d-d_l+1$. But Theorem \ref{thm:fixed_L} implies 
\begin{equation*}
\sum_{A_l \in \Gr(\Z^n, d_l) \atop A_l \cap S_l \neq 0} f_{p^{d_l/n}H_l}(A_lL) = o_{L, H_1, \ldots, H_k}(p^{d_l}),
\end{equation*}
which implies that \eqref{eq:abcde} is bounded by
\begin{equation*}
o_{L,H_1, \ldots, H_k}\left(\frac{p^{c-c'+d-1}}{|\mathcal{M}_p|}\right) = o_{L,H_1, \ldots, H_k}(1),
\end{equation*}
as desired.

\section{The case of partially overlapping sublattices}

In this section we prove Theorem \ref{thm:overlap}. Recall that we are considering the integral
\begin{equation} \label{eq:toy4}
\int_{X_n} \sum_{A \in \Gr(L,d_1)} \sum_{B \in \Gr(L,d_2) \atop \rk A \cap B = r} f_{H_1}(A) f_{H_2}(B) d\mu_n(L)
\end{equation}
for $1 \leq r < d_1, d_2$, so that $d_1 + d_2 < n + r$.

Let $L \in X_n$. For a primitive sublattice $C \subseteq L$ of rank $r < n$, we identify the quotient lattice $L/C$ with the projection of $L$ onto the $n-r$-dimensional subspace of $\R^n$ orthogonal to $C$, and assign the metric induced by this projection from the metric on $L$. If $A \in \Gr(L,d)$ satisfies $C \subseteq A$, then it is easy to see that $A/C \in \Gr(L/C, d-r)$, and that it satisfies 
\begin{equation*}
{\det}_L(A) = {\det}_L(C) {\det}_{L/C}(A/C).
\end{equation*}
Using this relation, we rewrite the inner sum of \eqref{eq:toy4} as
\begin{equation*}
\sum_{C \in \Gr(L, r)} \sum_{\bar{A} \in \Gr(L/C, d_1-r) \atop { \bar{B} \in \Gr(L/C, d_2-r) \atop \mathrm{indep}}} f_{H_1/\det_L C}(\bar{A}) f_{H_2/\det_L C}(\bar{B}).
\end{equation*}

We will interpret this expression as a pseudo-Eisenstein series, i.e. a function on $X_n$ of form $\sum_{\gamma \in P \cap \Gamma \backslash \Gamma} f(\gamma L)$ where $\Gamma = \SL(n,\Z)$ and $P$ is a parabolic subgroup of $\SL(n,\R)$, and then use the unfolding trick. Fix a representative of $L \in X_n$ in $\SL(n,\R)$, again denoted by $L$. In this context, a choice of $C \in \Gr(L, r)$ corresponds to two choices of $\gamma \in P(n-r,r,\Z) \backslash \SL(n,\Z)$, where
\begin{equation*}
P(a,b,F) = \left\{
\begin{pmatrix}
G_a & *  \\
& G_b
\end{pmatrix}
: G_a \in \SL(a, F), G_b \in \SL(b, F)
\right\}
\end{equation*}
for any ring $F$. $C$ and $\gamma$ are related by $C = \mbox{(sublattice generated by last $r$ rows of $\gamma L$)}$, and this correspondence is one-to-two due to the two possible orientations for the last $r$ rows of $\gamma L$.

Next, fixing a representative of $\gamma$, we can uniquely decompose $\gamma L$ in the form
\begin{equation} \label{eq:levi_decomp}
\begin{pmatrix} G_{n-r} & U \\  & G_r \end{pmatrix} \begin{pmatrix} \alpha^{-\frac{1}{n-r}}I_{n-r} &  \\  & \alpha^{\frac{1}{r}}I_r \end{pmatrix} K',
\end{equation}
for some $G_{n-r} \in \SL(n-r,\R)$, $G_{r} \in \SL(r,\R)$, $U \in \mathrm{Mat}_{(n-r) \times r}(\R)$, and $K'$ an element of a fundamental domain of $(\SO(n-r,\R) \times \SO(r,\R)) \backslash \SO(n,\R)$, which we fix in advance. In this context, a choice of an independent pair $\bar{A} \in \Gr(L/C, d_1-r)$ and $\bar{B} \in \Gr(L/C, d_2-r)$  corresponds to four choices --- again due to the four possibilities for the orientations --- of $\delta \in P'(n-d_1-d_2+r, d_2-r, d_1-r,\Z) \backslash \SL(n-r, \Z)$, where
\begin{equation*}
P'(a,b,c, F) = \left\{
\begin{pmatrix}
G_a & * & * \\
 & G_b & 0 \\
 &   &  G_c
\end{pmatrix}
: G_a \in \SL(a, F), G_b \in \SL(b, F), G_c \in \SL(c, F)
\right\}
\end{equation*}
for any ring $F$. Indeed, $\bar{A}$ is the sublattice generated by the last $d_1-r$ rows of
\begin{equation}\label{eq:temtem}
\delta \cdot G_{n-r} \cdot \alpha^{-\frac{1}{n-r}}I_{n-r} \cdot \left(\mbox{first $n-r$ rows of $K'$} \right);
\end{equation}
this expression is independent of $U$ because $\bar A$ is orthogonal to $C$, and equivalently also to the last $r$ rows of $K'$. Similarly, $\bar{B}$ is the sublattice generated by the next last $d_2-r$ rows of \eqref{eq:temtem}. But since we are only interested in the determinants of $\bar{A}$ and $\bar{B}$, in what follows we can regard them as the sublattices generated by the corresponding rows of $\delta \cdot \alpha^{-\frac{1}{n-r}}G_{n-r}$.

The considerations so far allows us to rewrite \eqref{eq:toy4} as
\begin{equation*}
\frac{1}{8}\int_{P(n-r,r,\Z) \backslash \SL(n,\R)} \sum_{\delta} f_{H_1/\alpha}(\bar{A}) f_{H_2/\alpha}(\bar{B}) d\mu_n,
\end{equation*}
where $\delta$ is summed over $P'(n-d_1-d_2+r, d_2-r, d_1-r,\Z) \backslash \SL(n-r, \Z)$.

Lemma \ref{lemma:measure} below, which gives a decomposition of $d\mu_n$ compatible with \eqref{eq:levi_decomp}, implies that this equals
\begin{equation*}
\mathrm{(const)}\int_0^\infty \int_{X_{n-r}} \sum_{\bar{A} \in \Gr(\alpha^{-\frac{1}{n-r}}G_{n-r},d_1-r) \atop {\bar{B} \in \Gr(\alpha^{-\frac{1}{n-r}}G_{n-r},d_2-r) \atop \mathrm{indep}}} f_{H_1/\alpha}(\bar{A}) f_{H_2/\alpha}(\bar{B}) d\mu_{n-r}(G_{n-r}) \cdot \alpha^{n-1} d\alpha,
\end{equation*}
which, by Theorem \ref{thm:main} (notice that $\det\alpha^{-\frac{1}{n-r}}G_{n-r} = \alpha^{-1}$, so we normalize accordingly), equals
\begin{equation*}
\mathrm{(const)}\int_0^\infty H_1^{n-r}H_2^{n-r}\alpha^{-n+d_1+d_2-1}d\alpha,
\end{equation*}
which is divergent, proving Theorem \ref{thm:overlap}. One way to understand this phenomenon is that, if $L \in X_n$ is heavily skewed i.e. its successive minima have a huge gap, there may exist too many possibilities for $C = A \cap B$. Indeed, if we additionally required that $\alpha = \det C \geq 1$, say, then we would have instead obtained
\begin{equation*}
\mathrm{(const)}\int_1^\infty H_1^{n-r}H_2^{n-r}\alpha^{-n+d_1+d_2-1}d\alpha,
\end{equation*}
which converges, at least when $d_1+d_2 < n$.

Before we prove the needed lemma, we fix our notations related to $d\mu_n$. Recall the standard fact (see e.g. \cite{KV}) that, with respect to the $NAK$ decomposition of $\SL(n,\R)$, we can write $d\mu_n$ as
\begin{equation*}
d\mu_n = \tau(n)dN \cdot dA \cdot dK,
\end{equation*}
where $\tau(n)$ is some constant, $dN = \prod_{i < j} dn_{ij}$, $dA = \prod_i \alpha_i^{-i(n-i)} d\alpha_i / \alpha_i$ upon writing $A = \mathrm{diag}(a_1, \ldots, a_n)$ and  $\alpha_i = a_i/a_{i+1}$, and $dK$ is the Haar measure on $\SO(n,\R)$ so that
\begin{equation*}
\int_{\SO(n,\R)} dK = \prod_{i=2}^n iV(i).
\end{equation*}
To make $\int_{X_n} d\mu_n = 1$, we set
\begin{equation*}
\tau(n) = \frac{1}{n}\prod_{i=2}^n \zeta^{-1}(i).
\end{equation*}

\begin{lemma}\label{lemma:measure}

With respect to the decomposition \eqref{eq:levi_decomp} of $\SL(n,\R)$, we have
\begin{equation*}
d\mu_n = \frac{n}{r(n-r)} \cdot \frac{\tau(n)}{\tau(r)\tau(n-r)}dU d\mu_r(G_r) d\mu_{n-r}(G_{n-r}) \alpha^{n-1} d\alpha dK',
\end{equation*}
where $dU = \prod_{1 \leq i \leq n-r \atop 1 \leq j \leq r} du_{ij}$ on writing $U = (u_{ij})_{1 \leq i \leq n-r \atop 1 \leq j \leq r}$, and $dK'$ is the natural measure on $(\SO(n-r,\R) \times \SO(r,\R)) \backslash \SO(n,\R)$ descended from the measure $dK$ on $\SO(n,\R)$.

\end{lemma}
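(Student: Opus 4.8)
The plan is to derive the decomposition by comparing two ways of writing down the Haar measure on $\SL(n,\R)$: directly via the $NAK$ decomposition recalled just before the lemma, and via the block Levi decomposition \eqref{eq:levi_decomp}, whose factors $G_{n-r}$ and $G_r$ carry their own $NAK$ decompositions. The key point is that the subgroup generated by the unipotent block $U$, the split torus element $\mathrm{diag}(\alpha^{-1/(n-r)}I_{n-r}, \alpha^{1/r}I_r)$, the block-diagonal $\SL(n-r)\times\SL(r)$, and the compact $K'$ together exhausts $\SL(n,\R)$ up to the left action of $P(n-r,r,\R)$'s compact-modulo part, and all these pieces meet each other generically transversally. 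Since the Haar measure is left- and right-invariant, it must factor as a product of the invariant measures on each piece, times a modular-character weight coming from the conjugation action of the torus element on $U$; the constant prefactor is then pinned down by a normalization computation. So concretely: (1) write $d\mu_n$ in block-parabolic coordinates $g = u \cdot t(\alpha) \cdot m \cdot \kappa$ with $u$ ranging over the unipotent radical $N_P$ of $P(n-r,r,\R)$, $t(\alpha)$ the one-parameter torus as in \eqref{eq:levi_decomp}, $m\in\SL(n-r,\R)\times\SL(r,\R)$, and $\kappa\in K'$; (2) compute the modulus $\delta_P(t(\alpha))$ governing $d(t u t^{-1}) = \delta_P(t)\,du$; (3) recognize the $m$-integral as $d\mu_{n-r}(G_{n-r})\,d\mu_r(G_r)$ and fold the $\kappa$-integral into $dK'$; (4) fix all constants by matching against the known total volume $\mu_n(X_n)=1$, i.e.\ $\tau(n) = \tfrac1n\prod_{i=2}^n\zeta^{-1}(i)$, and the analogous normalizations for $\tau(r),\tau(n-r)$.

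In more detail, for step (2): the torus element $t=t(\alpha)$ acts on the $(n-r)\times r$ block $U$ by $U\mapsto \alpha^{-1/(n-r)}\cdot\alpha^{-1/r}\,U = \alpha^{-n/(r(n-r))}U$ (reading off the scalars on the two diagonal blocks), so $\delta_P(t)=\alpha^{-n\cdot r(n-r)/(r(n-r))} = \alpha^{-n}$ on the $r(n-r)$-dimensional space of $U$'s; combined with the fact that in the $NAK$ normalization of the $dA$-factor the coordinate along $t(\alpha)$ contributes (after changing variables from the individual $\alpha_i$'s to $\alpha$) a factor $\alpha^{r(n-r)-1}d\alpha$ up to constants, the two effects combine to the stated $\alpha^{n-1}d\alpha$ weight on $dU\,d\alpha$. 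I would be a little careful here to make sure the exponent bookkeeping between ``$dU$ before conjugation'' and ``$dU$ after conjugation'' is done consistently with the way $G_{n-r}$ and $G_r$ are placed in \eqref{eq:levi_decomp}, since a sign error in $\delta_P$ would flip the direction of the $\alpha$-power and thereby change whether the integral in Theorem \ref{thm:overlap} diverges --- but it had better come out to $\alpha^{n-1}$, and one can sanity-check the exponent by the requirement that, when $d_1=d_2=r$ is vacuous, the measure must reduce correctly.

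The main obstacle, and the only genuinely delicate part, is determining the overall constant $\tfrac{n}{r(n-r)}\cdot\tfrac{\tau(n)}{\tau(r)\tau(n-r)}$ rather than just ``$\mathrm{(const)}$.'' This requires tracking the Jacobian between the $n-1$ torus coordinates $\alpha_1,\ldots,\alpha_{n-1}$ of the full $NAK$ decomposition and the mixed coordinates (the single $\alpha$ together with the $(n-r-1)+(r-1)$ torus coordinates internal to $G_{n-r}$ and $G_r$), as well as matching the $dK$ versus $dK'\times dK_{n-r}\times dK_r$ normalizations using $\int_{\SO_m}dK = \prod_{i=2}^m iV(i)$. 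The factor $\tfrac{n}{r(n-r)}$ is exactly the Jacobian of the linear change of variables on the Lie algebra of the diagonal torus expressing the ``$\alpha$-direction'' as a primitive rational combination of the simple coroots that are killed by $P(n-r,r,\R)$; I would compute it by writing both parametrizations of $dA$ explicitly in logarithmic coordinates and taking the determinant of the transition matrix, which is a single finite linear-algebra computation. Once that constant is verified, the lemma follows, and it is then exactly what is needed to produce the $\mathrm{(const)}\int_0^\infty H_1^{n-r}H_2^{n-r}\alpha^{-n+d_1+d_2-1}d\alpha$ in the proof of Theorem \ref{thm:overlap}.
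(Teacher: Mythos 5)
Your overall strategy --- decompose $d\mu_n$ along the parabolic $P(n-r,r,\R)$, reduce everything to a Jacobian computation on the diagonal torus, and pin down the constant by explicit normalization --- is essentially the paper's proof. The paper likewise treats the $N$- and $K$-parts as matching up trivially and locates the whole content in the change of variables from the torus coordinates $\alpha_1,\ldots,\alpha_{n-1}$ to $(\beta'_i,\beta''_i,\alpha)$, computed via the transition matrix in logarithmic coordinates exactly as you propose in your final paragraph; the constant $\tfrac{n}{r(n-r)}$ comes from the relation $\alpha_{n-r}=(b'_{n-r}/b''_1)\,\alpha^{-n/(r(n-r))}$, as you correctly anticipate.

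However, step (2) as you have written it is arithmetically wrong, and this is the one place where you commit to concrete exponents. You claim the weight $\alpha^{n-1}d\alpha$ arises by combining $\delta_P(t)=\alpha^{-n}$ with a factor $\alpha^{r(n-r)-1}d\alpha$ from the $dA$-normalization; but $\alpha^{-n}\cdot\alpha^{r(n-r)-1}=\alpha^{r(n-r)-n-1}\neq\alpha^{n-1}$ (for $n=3$, $r=1$ this reads $\alpha^{-2}$ versus the required $\alpha^{2}$). The source of the confusion is double counting: the exponent $-r(n-r)$ sitting at $i=n-r$ in $dA=\prod_i\alpha_i^{-i(n-i)}\,d\alpha_i/\alpha_i$ already \emph{is} the modular character of $P$ acting on its unipotent radical, so you must not introduce a separate $\delta_P$-factor on top of it. The correct single accounting is that $dU$ carries no $\alpha$-weight at all (it is the coordinate on the unconjugated block in \eqref{eq:levi_decomp}, and the Levi acts on it with Jacobian $1$), while substituting $\alpha_{n-r}\propto\alpha^{-n/(r(n-r))}$ into $\alpha_{n-r}^{-r(n-r)}\,d\alpha_{n-r}/\alpha_{n-r}$ alone produces $\tfrac{n}{r(n-r)}\,\alpha^{n}\,d\alpha/\alpha=\tfrac{n}{r(n-r)}\,\alpha^{n-1}d\alpha$. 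A second point your sketch glosses over: this substitution also produces stray powers of $\beta'_i$ and $\beta''_i$ (through the $b'_{n-r}/b''_1$ factor), and one must check that these cancel exactly against the mismatch between $\prod_i\alpha_i^{-i(n-i)}$ and $\prod_i\beta_i'^{-i(n-r-i)}\prod_i\beta_i''^{-i(r-i)}$; that cancellation is what makes the $G_{n-r}$- and $G_r$-marginals come out as genuine Haar measures, and it is the other half of the computation you would need to write down. (Also, the sanity check you suggest at ``$d_1=d_2=r$'' is not available, since that case is excluded by the hypotheses where the lemma is applied.) None of this changes the viability of your plan --- carrying out the explicit determinant in logarithmic coordinates, as you propose at the end, is precisely what fixes step (2).
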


\begin{proof}
The only nontrivial part of the proof consists of comparing the diagonal parts, or the ``$A$ parts'', of the measures $d\mu_{n}, d\mu_{n-r}, d\mu_r$. We can decompose 
\begin{equation*}
\begin{pmatrix}
a_1 & & & & \\
 & & & & \\
 & & \ddots & & \\
 & & & & \\
 & & & & a_n
\end{pmatrix}
=
\begin{pmatrix}
b'_1 & & & & & \\
 & \ddots & & & & & \\
 & & b'_{n-r} & & & \\
 & & & b''_1 & & \\
 & & & & \ddots & \\
 & & & & & b''_r
\end{pmatrix}
\begin{pmatrix}
\alpha^{\frac{-1}{n-r}} & & & & & \\
 & \ddots & & & & & \\
 & & \alpha^{\frac{-1}{n-r}}  & & & \\
 & & & \alpha^{\frac{1}{r}}  & & \\
 & & & & \ddots & \\
 & & & & & \alpha^{\frac{1}{r}}
\end{pmatrix},
\end{equation*}
where there is the relation $\prod a_i = \prod b'_i = \prod b''_i = 1$ among the entries. Write $\alpha_i = a_i/a_{i+1}, \beta'_i = b'_i/b'_{i+1}, \beta''_i = b''_i/b''_{i+1}$. Then the measures on the ``A parts'' of the groups $G_n, G_{n-r}, G_r$ are, respectively,
\begin{equation*}
dA := \prod_i \alpha_i^{-i(n-i)}\frac{d\alpha_i}{\alpha_i}, dB' = \prod_i \beta_i'^{-i(n-r-i)}\frac{d\beta'_i}{\beta'_i}, dB'' = \prod_i \beta''^{-i(r-i)}\frac{d\beta''_i}{\beta''_i}.
\end{equation*}

It remains to perform the change of coordinates from the $\alpha_i$-coordinates to the $\beta'_i, \beta''_i, \alpha$-coordinates. We have $\alpha_i = \beta'_i$ for $1 \leq i \leq n-r-1$ and $\alpha_{n-r+i} = \beta''_i$ for $1 \leq i \leq r-1$, and the single nontrivial relation
\begin{equation*}
\alpha_{n-r} = \frac{b'_{n-r}}{b''_1}\alpha^{-\frac{n}{r(n-r)}}.
\end{equation*}

At this point, we can compute and find that
\begin{equation} \label{eq:subsub}
dA = dB'dB'' \cdot \prod_{i=1}^{n-r-1} \beta'^{-ri}_i \prod_{i=1}^{r-1} \beta_{i}''^{-(n-r)(r-i)} \cdot \alpha_{n-r}^{-r(n-r)}\frac{d\alpha_{n-r}}{\alpha_{n-r}}.
\end{equation}

On the other hand, from the shape of the Jacobian matrix
\begin{equation*}
\begin{pmatrix}
- & \frac{\partial\alpha_i}{\partial\beta'_1} & - \\
  & \vdots & \\
- & \frac{\partial\alpha_i}{\partial\alpha} & - \\
  & \frac{\partial\alpha_i}{\partial\beta''_1} &\\
- & \vdots & - 
\end{pmatrix}
= \begin{pmatrix}
1 &            &    & *                                                        &    &            &    \\
   & \ddots &    & \vdots                                                &    &            &    \\
   &            & 1 & *                                                        &    &            &   \\
   &            &    & \frac{\partial\alpha_{n-r}}{\partial\alpha}  &    &           &    \\
   &            &    & *                                                        & 1 &            &    \\
   &            &    & \vdots                                                &    & \ddots &    \\
   &            &    & *                                                        &    &            & 1 
\end{pmatrix}
\end{equation*}
and the fact that
\begin{equation*}
\frac{\partial\alpha_{n-r}}{\partial\alpha} = -\frac{n}{r(n-r)}\alpha_{n-r}\alpha^{-1},
\end{equation*}
we have
\begin{equation*}
-\frac{d\alpha_{n-r}}{\alpha_{n-r}} = \frac{n}{r(n-r)}\frac{d\alpha}{\alpha} + \mbox{(terms in $d\beta'$ and $d\beta''$)},
\end{equation*}
and the $d\beta'$ and $d\beta''$ parts here do not affect the outcome of the computation. Thus we can pretend that we have
\begin{align*}
&\alpha_{n-r}^{-r(n-r)}\frac{d\alpha_{n-r}}{\alpha_{n-r}} \\
&= \frac{-n}{r(n-r)}\left(\frac{b'_{n-r}}{b''_1}\alpha^{-\frac{n}{r(n-r)}}\right)^{-r(n-r)}\frac{d\alpha}{\alpha} \\
&= \frac{-n}{r(n-r)} \frac{\prod_{i=1}^{n-r-1} \beta'^{ri}_i}{\prod_{i=1}^{r-1} \beta_{i}''^{-(n-r)(r-i)}}\alpha^{n} \frac{d\alpha}{\alpha}.
\end{align*}

Substituting this into \eqref{eq:subsub}, we obtain
\begin{equation*}
dA = \frac{-n}{r(n-r)}dB'dB''\alpha^n\frac{d\alpha}{\alpha},
\end{equation*}
which completes the proof. By reorienting $\alpha$ (which moves in the opposite direction to $\alpha_{n-r}$) we can eliminate the negative sign.

\end{proof}

\section{Average number of flags}

The same technique as in the previous section can be applied to compute the $\mu_n$-average number of flags bounded by certain constraints, even though such a formula for a fixed lattice is not known and is probably difficult to find. In this section, we compute the average number of flags such that $\det A_i \leq H_i$ for $i = 1, \ldots, k$, i.e. the quantity
\begin{equation*}
\int_{X_n} \sum_{A_1 \subseteq \ldots \subseteq A_k \subseteq L \atop \dim A_i = d_i} \prod_{i=1}^k f_{H_i}(A_i) d\mu_n,
\end{equation*}
or equivalently
\begin{equation*}
\int_{X_n} \sum_{A_1 \in \Gr(L, d_1)} \sum_{\bar{A}_2 \in \Gr(L/A_1, d_2-d_1)} \ldots \sum_{\bar{A}_k \in \Gr(L/A_{k-1}, d_k-d_{k-1})} f_{H_1}(A_1)f_{H_2/\det A_1}(A_2) \ldots d\mu_n,
\end{equation*}
thereby proving Theorem \ref{thm:flags}.

First consider the case $k=2$, in which we are computing
\begin{equation*}
\int_{X_n} \sum_{A_1 \in \Gr(L, d_1)} \sum_{\bar{A}_2 \in \Gr(L/A_1, d_2-d_1)}  f_{H_1}(A_1)f_{H_2/\det A_1}(A_2) d\mu_n.
\end{equation*}
By the same argument as in the previous section, and Lemma \ref{lemma:measure}, this equals
\begin{align*}
&\frac{n}{d_1(n-d_1)} \cdot \frac{\tau(n)}{\tau(d_1)\tau(n-d_1)} \cdot \frac{1}{2}\mathrm{vol}\left(\frac{\SO(n,\R)}{\SO(n-d_1,\R) \times \SO(d_1,\R)}\right) \cdot \\
&\int \int_{X_{d_1}}d\mu_{d_1} \int_{X_{n-d_1}} \sum_{\bar{A}_2} f_{H_2/\alpha}(\bar{A}_2) d\mu_{n-d_1} f_{H_1}(\alpha)\alpha^{n-1}d\alpha.
\end{align*}
Using the fact that $\mathrm{vol}(\SO(n,\R)) = \prod_{i=2}^n iV(i)$, one finds that the product of the terms on the first line here equals $na(n,d_1)$. By Theorem \ref{thm:main}, the integral part is equal to
\begin{align*}
&a(n-d_1, d_2 - d_1)\int_0^{H_1} \alpha^{n-1} \cdot \alpha^{d_2-d_1} \left(\frac{H_2}{\alpha}\right)^{n-d_1} d\alpha \\
&= a(n-d_1, d_2 - d_1)H_2^{n-d_1} \int_0^{H_1} \alpha^{d_2-1}d\alpha \\
&= \frac{a(n-d_1, d_2 - d_1)}{d_2}H_1^{d_2}H_2^{n-d_1}.
\end{align*}
This proves the $k=2$ case. For general $k$, we proceed by induction: we have
\begin{align*}
&\frac{n}{d_1(n-d_1)} \cdot \frac{\tau(n)}{\tau(d_1)\tau(n-d_1)} \cdot \frac{1}{2}\mathrm{vol}\left(\frac{\SO(n,\R)}{\SO(n-d_1,\R) \times \SO(d_1,\R)}\right) \cdot \\
&\int \int_{X_{d_1}}d\mu_{d_1} \int_{X_{n-d_1}} \sum_{\bar{A}_2, \ldots, \bar{A}_k} f_{H_2/\alpha}(\bar{A}_2) \ldots f_{H_k/\det A_{k-1}\alpha}(\bar{A}_k) d\mu_{n-d_1} f_{H_1}(\alpha)\alpha^{n-1}d\alpha.
\end{align*}
The first line is exactly the same as in the $k=2$ case. As for the integral, writing $\mathfrak{d}' = (d_2-d_1, \ldots, d_k-d_1)$, and using the induction hypothesis, we find that it is equal to
\begin{align*}
&a(n-d_1, \mathfrak{d}')\int_0^{H_1} \alpha^{n-1} \cdot \alpha^{d_k-d_1} \prod_{i=2}^{k}\left(\frac{H_i}{\alpha}\right)^{d_{i+1}-d_{i-1}} d\alpha \\
&= a(n-d_1, \mathfrak{d}')\prod_{i=2}^{k}{H_i}^{d_{i+1}-d_{i-1}}\int_0^{H_1} \alpha^{d_2-1} d\alpha \\
&= \frac{a(n-d_1, \mathfrak{d}')}{d_2}\prod_{i=1}^{k}{H_i}^{d_{i+1}-d_{i-1}}.
\end{align*}
Since
\begin{equation*}
a(n-d_1, \mathfrak{d}') = a(n-d_1, d_2-d_1) \prod_{i=2}^{k-1} \frac{n-d_{i-1}}{d_{i+1}-d_{i-1}}a(n-d_i, d_{i+1} - d_i),
\end{equation*}
this gives the desired result.


\begin{thebibliography}{99}

\bibitem{BSW} S. Bai, D. Stehl\'e, W. Wen. Measuring, simulating and exploiting the head concavity phenomenon in BKZ. \emph{Advances in cryptology --- ASIACRYPT 2018}. Part I, 369-404, Lecture Notes in Comput. Sci., 11272, Springer, Cham, 2018. 

\bibitem{DM} S. G. Dani and G. Margulis. Limit distribution of orbits of unipotent flows and values of quadratic forms. Advances in Soviet Math., Vol 16, 1993, pp. 91-137.

\bibitem{FMT} J. Franke, Y. Manin, and Y. Tschinkel. Rational points of bounded height on Fano varieties. Invent. Math. 95 (1989), no. 2, 421-435. 

\bibitem{Kim} S. Kim. Random lattice vectors in a set of size $O(n)$. Int. Math. Res. Not. (2020), 2020(5): 1385-1416.

\bibitem{Kim2} S. Kim. Counting rational points of a Grassmannian. arXiv: 1908.01245

\bibitem{KV} S. Kim and A. Venkatesh. The behavior of random reduced bases. Int. Math. Res. Not. (2018), 2018(20): 6442-6480.

\bibitem{Rogers} C. A. Rogers, Mean values over the space of lattices. Acta Math. 94 (1955), 249-287.

\bibitem{Sch} W. M. Schmidt. Asymptotic formulae for point lattices of bounded determinant and subspaces of bounded height. Duke Math. J. 35 (1968), 327-339.

\bibitem{Sch3} W. M. Schmidt. Masstheorie in der Geometrie der Zahlen. Acta Math. 102, no. 3-4 (1959): 159-224.

\bibitem{SE94} P. Schnorr and M. Euchner. Lattice basis reduction: improved practical algorithms and solving subset sum problems. \emph{Math. Programming} 66 (1994), no. 2, Ser. A, 181-199. 

\bibitem{SW} U. Shapira and B. Weiss. A volume estimate for the set of stable lattices. Comptes Rendus Math\'ematique 352, no.11 (2014), pp.875-879.

\bibitem{Shi} G. Shimura. Introduction to the arithmetic theory of automorphic functions. Princeton University Press, Princeton, N.J., 1971.

\bibitem{Sie} C. L. Siegel. A mean value theorem in geometry of numbers. Ann. of Math. (2) 46, (1945). 340-347. 

\bibitem{Sod} A. S\"odergren, On the Poisson distribution of lengths of lattice vectors in a random lattice. Math. Z. 269 (2011), 945-954.

\bibitem{SS} A. S\"odergren and A. Str\"ombergsson. On the generalized circle problem for a random lattice in large dimension. Adv. Math. 345 (2019), 1042-1074.

\bibitem{Thu2} J. L. Thunder. Asymptotic estimates for rational points of bounded height on flag varieties. Compositio Math. 88 (1993), no. 2, 155-186.

\bibitem{Thu3} J. L. Thunder. Higher-dimensional analogs of Hermite's constant. Michigan Mathematics Journal 45, no. 2 (1998): 301-314.


\end{thebibliography}
\end{document}